\newcommand{\R}{\mathbb{R}}
\newcommand{\Om}{\Omega}
\newcommand{\Omh}{\Omega_h}
\newcommand{\Ga}{\Gamma}
\newcommand{\Gah}{\Gamma_h}
\newcommand{\tr}{\gamma}
\newcommand{\trh}{\gamma_h}
\newcommand{\bfb}{\mathbf{b}}
\newcommand{\bfd}{\mathbf{d}}
\newcommand{\bfe}{\mathbf{e}}
\newcommand{\bff}{\mathbf{f}}
\newcommand{\bfg}{\mathbf{g}}
\newcommand{\bfu}{\mathbf{u}}
\newcommand{\bfw}{\mathbf{w}}
\newcommand{\bfx}{\mathbf{x}}
\newcommand{\bfz}{\mathbf{z}}
\newcommand{\bfA}{\mathbf{A}}
\newcommand{\bfK}{\mathbf{K}}
\newcommand{\bfM}{\mathbf{M}}
\newcommand{\bftr}{\bm{\gamma}}
\newcommand{\T}{^\mathrm{T}}
\newcommand{\hatx}{\widehat{x}}
\newcommand{\hatT}{\widehat{T}}
\newcommand{\hatsig}{\widehat{\sigma}}
\newcommand{\calTh}{\mathcal{T}_h}
\newcommand{\dx}{\mathrm{d}x}
\newcommand{\dS}{\mathrm{d}\bm{\sigma}}
\newcommand{\dfdx}[2]{\frac{\partial #1}{\partial #2}}
\newcommand{\nb}{\nabla}
\newcommand{\nbg}{\nabla_\Gamma}
\newcommand{\lbg}{\Delta_\Gamma}
\newcommand{\C}{\kappa}
\newcommand{\uDelt}{\mathcal{U}_\delta}
\title[ISOFEM Analysis of a generalized Robin BVP]{Isoparametric finite element analysis of a generalized \\ Robin boundary value problem on curved domains}
\author[D. Edelmann]{\firstname{Dominik} \lastname{Edelmann}}
\address{Mathematisches Institut, Universit{\"a}t T{\"u}bingen, Germany}
\email{edelmann@na.uni-tuebingen.de}
\keywords{generalized Robin boundary conditions, Laplace--Beltrami operator, isoparametric finite elements, finite element method, error analysis}
\begin{document}

\begin{abstract}
	We study the discretization of an elliptic partial differential equation, posed on a two- or three-dimensional domain with smooth boundary, endowed with a generalized Robin boundary condition which involves the Lap\-lace--Beltrami operator on the boundary surface. The boundary is approximated with piecewise polynomial faces and we use isoparametric finite elements of arbitrary order for the discretization. We derive optimal-order error bounds for this non-conforming finite element method in both $L^2$- and $H^1$-norm. Numerical examples illustrate the theoretical results.
\end{abstract}

\maketitle


\section{Introduction}
\subsection{The generalized Robin boundary value problem}
In this paper, we study the following second-order partial differential equation endowed with a boundary condition including the Laplace--Beltrami operator
\begin{align}\label{eq: GRP}
\left\{ \begin{aligned}
	- \Delta u + \C u &= f \quad&&\text{in }\Omega \,,\\
	\dfdx{u}{\nu} + \alpha u - \beta \lbg u &=g \quad&&\text{on }\Ga = \partial \Omega \,,
\end{aligned}\right.
\end{align}
where $\Omega \subset \R^n$ ($n=2, 3$) is a domain with curved boundary $\Ga = \partial \Om$, $\alpha > 0, \beta > 0$ and $\C \ge 0$ are given constants and $f$, $g$ are given functions on $\Omega$ and $\partial \Omega$, respectively.

The generalized Robin problem \eqref{eq: GRP} is studied in \cite{KCDQ15} (with $\C = 0$). The authors prove existence and uniqueness of the weak solution and analyze the regularity of the solution given the regularity of $f$ and $g$. It turns out that the solution to the generalized problem possesses better regularity properties than the solution to the standard Robin problem, that is \eqref{eq: GRP} with $\beta = 0$. Moreover, they analyze the \emph{conforming} finite element discretization of \eqref{eq: GRP} and prove optimal-order error bounds in both $L^2$- and $H^1$-norm. However, in \cite{KCDQ15} the authors have to assume that $\Omega$ can be represented exactly by the finite element mesh such that the numerical domain coincides with the exact domain or, equivalently, that the finite element space $V_h$ is contained in the solution space $V$. Two different cases are considered: either $\Ga$ is polyhedral, or of class $C^{1,1}$. In the first case, they have to introduce mixed boundary conditions, because the generalized boundary condition cannot be imposed on the entire boundary (see \cite[Remark 3.1]{KCDQ15}). In the second case, it is restrictive to assume that the computational mesh is capable of  representing the boundary exactly.

The purpose of this paper is to generalize the results of \cite{KCDQ15} to \emph{non-conforming} finite elements, where the additional error that stems from the approximation of the geometry is taken into account. Based on a polyhedral approximation of $\Om$, on which linear finite elements can be used, we construct a piecewise polynomial approximation domain and isoparametric finite elements of arbitrary order. Since the finite element space is no longer contained in the solution space, we cannot compare the finite element solution and the exact solution directly. To overcome this, we lift the finite element solution to the solution space to be able to analyze the error of the method.

The above setting allows us to treat different types of boundary conditions in a unified setting. Here we focus on the generalized Robin problem, and the convergence results for the isoparametric finite element discretization of \eqref{eq: GRP} with the standard Robin boundary condition ($\beta=0$) or Neumann boundary condition ($\alpha=\beta=0$) are obtained as a consequence. We derive error bounds between the exact solution and the lifted finite element solution that are optimal with respect to the regularity of the right-hand side functions $f$ and $g$. Under suitable regularity assumptions, the error satisfies optimal-order error bounds.

\subsection{Applications}
The problem \eqref{eq: GRP} has applications for example in heat conduction processes, see \cite{goldstein2006}, or in the context of Schr{\"o}dinger operators \cite{gesztesy2008}. Generalized Robin boundary conditions appear also in the context of domain decomposition methods \cite{gerardo2010,quarteroni1999} and in the Schwarz waveform relaxation algorithm \cite{gander2007,halpern2009}. A more comprehensive list of applications can be found in \cite{KCDQ15}.

\subsection{Outline of the paper}
In Section 2, we introduce basic notations and derive a variational form of the  generalized Robin problem. In Section~3, the approximation of the geometry is described, followed by the isoparametric finite element method in Section~4. In Section~5, we derive error estimates in both $L^2$- and $H^1$-norm. We begin by stating the main results in Section 5.1, followed by a convergence proof for the $H^1$-estimate that is clearly separated into stability and consistency, and finally the proof of the $L^2$-estimate.  We finish with some numerical experiments in two and three space dimensions in Section~6.

\section{Continuous problem}
\subsection{Preliminaries}
Let $\Omega \subset \R^n$, ($n=2, 3$) be an open, bounded and connected domain with sufficiently smooth boundary $\Ga = \partial \Om$. In the following, we require $\Ga$ at least of class $C^2$. For a more thorough introduction to the following concepts and definitions, we refer to \cite[Section 2]{DE13acta}, where more details about the following concepts can be found, cf. \cite{DE07a,ER13}.

The outer unit normal on $\Ga$ is denoted by $\nu$. The tangential gradient of a function $w$ defined on some open neighborhood of $\Ga$ is given by
\begin{align}
	\nbg w = \nb w - \left( \nb w \cdot \nu \right) \nu
\end{align}
and depends on values of $w$ on $\Ga$ only. The Laplace--Beltrami operator is given by
\begin{align}
	\lbg w = \nbg \cdot \nbg w = \sum_{j=1}^n \left(\nbg\right)_j \left(\nbg\right)_j w \,.
\end{align}
We denote by $d: \R^n \to \R$ the signed distance function
\begin{align}
	d(x) =
	\begin{cases}
		- \mathrm{dist}(x,\Ga) &\text{if }x \in \Om \,,\\
		0 &\text{if }x \in \Ga \,,\\
		\mathrm{dist}(x,\Ga) &\text{otherwise,}
	\end{cases}
\end{align}
where $\mathrm{dist}(x,\Ga) = \inf\{|x-y|: y \in \Ga\}$ denotes the distance of $x$ to $\Ga$. Since $\Ga$ is a $C^2$-manifold, there exists a $\delta > 0$ and a strip
\begin{align}\label{def: uDelt}
	\mathcal{U}_\delta = \{ x \in \R^n: |d(x)|<\delta \}
\end{align}
such that for each $x \in \uDelt$ there exists a unique $p(x) \in \Ga$ such that
\begin{align}\label{eq: closest point}
	x = p(x) + d(x) \nu(p(x)) \,,
\end{align}
see \cite[Section 2]{DE13acta}. $p(x)$ is the closest point to $x$ on $\Ga$.

We let $c>0$ denote a generic constant that assumes different values on different occurrences. We use the standard notation for Sobolev spaces, i.e.  $H^0(\Om) := L^2(\Om) = \{ u: \Om \to \R: \, \int_\Om u^2 \dx < \infty  \}$, $H^{k+1}(\Om) := \{ u \in L^2(\Om): \, \nb u \in H^k(\Om)^n \}$. It is well known that the trace $\tr u$ of a function $u \in H^k(\Om)$ is  in $H^{k-1/2}(\Ga)$ if $\Ga \in C^{k-1,1}$. Due to the Laplace--Beltrami operator in the boundary condition of~\eqref{eq: GRP}, it turns out that we need $\tr u \in H^1(\Ga)$ to derive a weak formulation. Therefore $H^1(\Om)$ is not the suitable weak solution space. Instead, we work with the space
%
\begin{align}
	H^k(\Om;\Ga) := \left\{ u \in H^k(\Om): \, \tr u \in H^k(\Ga) \right\} 
\end{align}
endowed with the norm
\begin{align}\label{eq: norm}
	\| u \|_{H^k(\Om;\Ga)} = \left( \| u \|_{H^k(\Om)}^2 + \| \tr u \|_{H^k(\Ga)}^2 \right)^{1/2} \,.
\end{align}
Recall that for a function $w \in H^k(\Ga)$, the $H^k(\Ga)$-norm is defined using tangential derivatives, i.e.
\begin{align}
	\| w \|_{H^k(\Ga)} = \left( \| w \|_{L^2(\Ga)}^2 + \| \nbg w \|_{H^{k-1}(\Ga)^n}^2 \right)^{1/2} \,.
\end{align}

It is shown in \cite[Lemma 2.5]{KCDQ15} that the space $H^k(\Om;\Ga)$ with the inner product that induces \eqref{eq: norm} is a Hilbert space.

\subsection{Variational form}
To derive the weak formulation, we make use of the integration by parts formula on $\Ga$: for $w \in H^1(\Ga)$, we have (see \cite{DE13acta})
\begin{align}\label{eq: ibp}
	\int_{\Ga} - \Delta_\Ga u w \dS = \int_{\Ga} \nbg u \cdot \nbg w \dS \,.
\end{align}
We multiply \eqref{eq: GRP} with a test function $\varphi$, integrate over $\Omega$ and obtain
\begin{align}
	\int_{\Om} \nb u \cdot \nb \varphi + \C u \varphi \dx - \int_{\Ga} \dfdx{u}{\nu} \tr \varphi \dS = \int_{\Om} f \varphi \dx \,.
\end{align}
Substituting the boundary condition and using \eqref{eq: ibp} with $w = \tr \varphi$, we arrive at
\begin{align}
	\int_{\Om} \nb u \cdot \nb \varphi + \C u \varphi \dx + \alpha \int_{\Ga} (\tr u) (\tr \varphi) \dS + \beta \int_{\Ga} \nbg (\tr u) \cdot \nbg (\tr \varphi) \dS = \int_{\Om} f \varphi \dx + \int_{\Ga} g (\tr \varphi) \dS \,.
\end{align}
We use the following notation for bilinear forms defined on $H^1(\Om;\Ga) \times H^1(\Om;\Ga)$:
\begin{align}\label{def: bilinear forms}
	m^\Om(u,v) &= \int_{\Om} u v \dx \,,\\
	a^\Om(u,v) &= \int_{\Om} \nb u \cdot \nb v \dx \,,\\
	m^\Ga(u,v) &= \int_{\Ga} (\tr u) (\tr v) \dS \,, \\
	a^\Ga(u,v) &= \int_{\Ga} \nbg (\tr u) \cdot \nbg (\tr v) \dS \,, \\
	a(u,v) &= a^\Om(u,v) + \C m^\Om(u,v) + \alpha m^\Ga(u,v) + \beta a^\Ga(u,v) \,.
\end{align}
The right hand side is denoted by
\begin{align}
	\ell(\varphi) &= \int_\Om f \varphi \dx + \int_\Ga g (\tr \varphi) \dS \,.
\end{align}

The variational form thus reads: find $u \in V=H^1(\Om;\Ga)$ such that
\begin{align}\label{eq: variational form}
	a(u,\varphi) = \ell(\varphi)
\end{align}
for all $\varphi \in H^1(\Om;\Ga)$.
The following regularity result is proved in \cite{KCDQ15}.
\begin{proposition}\label{prop: ex uniq reg}
	Let $\alpha, \beta > 0$, $\kappa \ge 0$ and $j \ge 1$. If $\Ga \in C^{j,1}$, $f \in H^{j-1}(\Om)$, $g \in H^{j-1}(\Ga)$, then there exists a unique solution $u \in H^{j+1}(\Om;\Ga)$ that satisfies the a priori bound
	\begin{align}
		\| u \|_{H^{j+1}(\Om;\Ga)} \le c \left( \| f \|_{H^{j-1}(\Om)} + \| g \|_{H^{j-1}(\Ga)} \right) \,.
	\end{align}
\end{proposition}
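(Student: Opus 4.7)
My plan is to prove the proposition in two stages: existence and uniqueness via the Lax--Milgram lemma, and the $H^{j+1}$-regularity by a bootstrap that alternates classical Dirichlet elliptic regularity in $\Om$ with elliptic regularity of the Laplace--Beltrami operator on the closed manifold $\Ga$.

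For Lax--Milgram on $V=H^1(\Om;\Ga)$, continuity of $a$ is immediate since each of the four summands is bounded by $\|\cdot\|_V \|\cdot\|_V$. The crux is coercivity: the boundary terms $\alpha m^\Ga+\beta a^\Ga$ already give $\min(\alpha,\beta)\|\tr u\|_{H^1(\Ga)}^2$, and the interior part is handled by $a^\Om+\C m^\Om\geq\min(1,\C)\|u\|_{H^1(\Om)}^2$ when $\C>0$; in the remaining case $\C=0$, coercivity is closed by the Poincar\'e--trace inequality $\|u\|_{L^2(\Om)}^2 \leq c (\|\nb u\|_{L^2(\Om)}^2 + \|\tr u\|_{L^2(\Ga)}^2)$ together with the nonnegative boundary mass contribution. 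Continuity of $\ell$ on $V$ is standard via the trace theorem, and the resulting energy estimate $\|u\|_V\leq c(\|f\|_{L^2(\Om)}+\|g\|_{L^2(\Ga)})$ serves as the starting point for the regularity bootstrap.

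For the regularity claim, the central observation is that the boundary condition rewrites as the surface elliptic equation
\begin{equation*}
-\beta\,\lbg(\tr u) + \alpha\,\tr u = g - \dfdx{u}{\nu} \quad\text{on } \Ga,
\end{equation*}
which, since $\alpha,\beta>0$ and $\Ga$ is closed without boundary, admits the estimate $\|\tr u\|_{H^{s+2}(\Ga)} \leq c \bigl(\|g\|_{H^{s}(\Ga)} + \|\dfdx{u}{\nu}\|_{H^{s}(\Ga)}\bigr)$. Coupled with the Dirichlet elliptic estimate $\|u\|_{H^{s+1}(\Om)} \leq c \bigl(\|f\|_{H^{s-1}(\Om)} + \|\tr u\|_{H^{s+1/2}(\Ga)}\bigr)$, valid whenever $\Ga\in C^{s,1}$, and the conormal trace inequality $\|\dfdx{u}{\nu}\|_{H^{s-1/2}(\Ga)} \leq c \|u\|_{H^{s+1}(\Om)}$, one can iterate: each cycle lifts the regularity of $\tr u$ on $\Ga$ and of $u$ on $\Om$ by one order. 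Starting from $u\in V$, after $j$ cycles one reaches $u\in H^{j+1}(\Om)$ with $\tr u\in H^{j+1}(\Ga)$, which is the claim; induction on $j$, with the hypothesis $\Ga\in C^{j,1}$---precisely what is required at the last bootstrap step---closes the argument and accumulates the a priori bound.

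The main obstacle will be the first iteration, where $u$ has only $H^1(\Om)$-regularity and $\dfdx{u}{\nu}$ lives a priori in $H^{-1/2}(\Ga)$. Feeding this into the surface equation requires a careful distributional interpretation of the conormal trace (through $\langle \dfdx{u}{\nu}, \tr\varphi\rangle = a^\Om(u,\varphi) + \C m^\Om(u,\varphi) - \int_\Om f\varphi\,\dx$ for any $\varphi\in H^1(\Om)$) and the use of fractional Sobolev indices to gain half an order of regularity at a time until one reaches the range $s\geq 1/2$ where the classical conormal trace theorem applies. Tracking the constants accumulated along the bootstrap is routine but needs care to produce the a priori bound in the stated form.
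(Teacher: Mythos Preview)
The paper does not prove this proposition at all: it is stated as a quotation of a result from \cite{KCDQ15}, with no argument given beyond the citation. So there is nothing in the paper to compare your attempt against line by line.

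That said, your sketch is essentially the argument that appears in the cited reference. The Lax--Milgram portion is correct as you describe it; in particular, the case $\kappa=0$ does need the Poincar\'e--trace inequality $\|u\|_{L^2(\Om)}^2 \le c(\|\nb u\|_{L^2(\Om)}^2 + \|\tr u\|_{L^2(\Ga)}^2)$, which holds for bounded Lipschitz domains and is exactly the device used there. The regularity bootstrap---reading the boundary condition as a strictly elliptic surface equation $-\beta\lbg(\tr u)+\alpha\tr u = g - \partial_\nu u$ on the closed manifold $\Ga$ and alternating its regularity theory with interior Dirichlet regularity---is also the strategy of \cite{KCDQ15}. Your identification of the delicate first half-step (interpreting $\partial_\nu u$ distributionally in $H^{-1/2}(\Ga)$ via the Green identity before the classical conormal trace theorem applies) is accurate, and the cited paper handles it in precisely that way. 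The only point worth sharpening is that the bootstrap naturally proceeds in half-integer increments (the Dirichlet estimate costs a half-order mismatch between boundary and interior scales), so you should be explicit that two half-steps per cycle are needed to reach the integer regularity $H^{j+1}(\Om;\Ga)$; the hypothesis $\Ga\in C^{j,1}$ is exactly what the final Dirichlet lift requires.
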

Let us remark that for the standard Robin boundary value problem, i.e.~\eqref{eq: GRP} with $\beta=0$, we need $g \in H^{j-1/2}(\Ga)$ to have $u \in H^{j+1}(\Om)$, and the trace theorem then yields $\tr u \in H^{j+1/2}(\Ga)$, so the generalized problem requires less regularity in the data to produce a more regular solution, cf.~\cite[Remark 3.5]{KCDQ15}.

\section{Domain approximation}
Before we describe the finite element method, we need to construct an approximation of $\Om$ and $\Ga$. We follow the construction of \cite{ER13}, which is based on \cite{Len86}, \cite{Ber89} and \cite{demlow2009}.
\subsection{Linear approximation}
Let $\Omh^{(1)}$ be a polyhedral approximation of $\Om$ with boundary $\Gah^{(1)} = \partial \Omh^{(1)}$. We construct $\Omh^{(1)}$ such that the faces of $\Gah^{(1)}$ are simplices whose vertices lie on $\Ga$ (triangles in $\R^3$ and straight lines in $\R^2$). We construct a quasi-uniform triangulation $\calTh^{(1)}$ of $\Omh^{(1)}$ consisting of simplices (tetrahedrons on $\R^3$ and triangles in $\R^2$). We set
\begin{align}
	h = \max\{ \mathrm{diam}(T): T \in \calTh^{(1)} \} 
\end{align}
and assume that $h \le h_0$, where $h_0$ is sufficiently small such that $\Gah^{(1)} \subset \uDelt$, where $\uDelt$ is defined in \eqref{def: uDelt}.

\subsection{Exact triangulation}
Before we define the computational domain, we define an \emph{exact triangulation} of $\Om$. We denote by $\hatT$ the unit $n$-simplex. For each $T \in \calTh^{(1)}$, there exists an affine transformation $\Phi_T: \R^n \to \R^n$ that maps $\hatT$ onto $T$, which we write as
\begin{align}
	\Phi_T(\hatx) = B_T \hatx + b_T \,,
\end{align}
where $B_T \in \R^{n \times n}$, $b_T \in \R^n$. $\Phi_T$ is exactly the map used for linear finite elements. 
We now call $T^c$ a curved simplex if there exists a $C^1$-mapping $\Phi_T^c$ that maps $\hatT$ onto $T^c$ which is of the form
\[ \Phi_T^c = \Phi_T + \varrho_T \,,  \]
where $\Phi_T$ is an affine map as defined above and $\varrho_T: \hatT \to \R^N$ is a $C^1$-mapping satisfying
\begin{align}\label{eq: derivative bound rho}
	C_T := \sup_{\hatx \in \hatT} |D \varrho_T(\hatx) B_T^{-1} | \le C < 1 \,.\
\end{align}

There are several ways to define $\varrho_T$. We follow the construction of \cite{ER13}, based on \cite{Dub90}. Note that each $T \in \calTh^{(1)}$ is either an internal simplex with at most one node on the boundary, or $T$ has more than one node on the boundary. In the first case, we set $\varrho_T = 0$. For the latter case, we denote by $l$ the number of nodes of $T$ that lie on the boundary $\Gah^{(1)}$. The vertices $x_1^T,\ldots,x_{n+1}^T$ of $T$ are ordered such that $x_1^T,\ldots,x_l^T$ lie on $\Gah^{(1)}$. For each $x^T \in T$, there is a unique representation
\[ x^T = \sum_{j=1}^{n+1} \lambda_j x_j^T \]
in barycentric coordinates. Note that
\[ \lambda_{n+1} = 1- \sum_{j=1}^n \lambda_j \,.\]
We write $\hatx^T=(\lambda_1,\ldots,\lambda_N)$ for the coordinates of $x$ in $\hatT$. We introduce
\[ \lambda^*(\hatx)=\sum_{j=1}^l \lambda_j \,, \quad \hatsig = \{ \hatx \in \hatT: \lambda^*(\hatx) = 0 \}\,. \]
We have $\lambda^*(\hatx) = 0$ if $\hatx$ is a node which is not belonging to the boundary (or if $\hatx$ is on the edge between such nodes in the three-dimensional case, when $l=2$), and $\lambda^*(\hatx)=1$ if $\hatx \in T \cap \Gah^{(1)}$.

We denote by $\tau_T$ the face of $\Gah^{(1)}$ that corresponds to the boundary face of $T$, i.~e. $\tau_T = T \cap \Gah^{(1)}$. For $\hatx \notin \hatsig$, we denote the projection of $x=\Phi_T(\hatx)$ onto $\tau_T$ by
\[ y(\hatx) = \sum_{j=1}^l \frac{\lambda_j}{\lambda^*} x_j^T \,. \]
Then, using the normal projection $p$ defined in \eqref{eq: closest point}, we define $\varrho_T$ by
\begin{align}
	\varrho_T(\hatx) =
	\begin{cases}
		(\lambda^*(\hatx))^{k+2}(p(y(\hatx))-y(\hatx)) \,,\quad&\text{if } \hatx \notin \hatsig \,,\\
		0\,,\quad&\text{if }\hatx \in \hatsig\,.
	\end{cases}
\end{align}

Basic regularity properties of the above maps are stated and proved in \cite{ER13}. In particular, it is shown that $\rho_T$ satisfies \eqref{eq: derivative bound rho} for $h \le h_0$ sufficiently small.

\subsection{Computational domain and lifts}
We can now define the higher-order computational domain $\Omh^{(k)}$ for $k \ge 1$. Let $T \in \calTh^{(1)}$ and $\varphi_1^k,\ldots,\varphi_{n_k}^k$ be the Lagrangian basis functions of degree $k$ on $\hatT$ corresponding to the nodal points $\hatx^1,\ldots,\hatx^{n_k}$ on $\hatT$. Here, $n_k$ denotes the number of nodal points on each element, for example $n_k=4$ or $n_k=10$ for linear or quadratic finite elements in three  dimensions. Then, we define a parametrization of a polynomial simplex $T^{(k)}$ by
\[ \Phi_T^{(k)}(\hatx) = \sum_{j=1}^{n_k} \Phi_T^c(\hatx^j) \varphi_j^k(\hatx) \,. \]
Note that, by the Lagrangian property, we have
\[ \Phi_T^{(k)}(\hatx^l) = \Phi_T^c(\hatx^l) \,. \]
We can apply this to each $T \in \calTh^{(1)}$ and then define $\Omh^{(k)}$ as the union of elements in $\calTh^{(k)}$, defined by
\[ \calTh^{(k)} := \{ T^{(k)}:T \in \calTh^{(1)} \} \,,\quad T^{(k)} := \{ \Phi_T^{(k)}(\hatx): \hatx \in \hatT \} \,. \]
For $k=1$, this notation is consistent with the notation of $\Omh^{(1)}$ in the previous subsection. 
\begin{definition}
	For a function $w_h: \Omh^{(k)} \to \R$, its lift $w_h^l: \Om \to \R$ is defined by $w_h^l = w_h \circ (\Phi_T^{(k)})^{-1}$, i.e.
	\begin{align}
		w_h^l\left( \Phi_T^{(k)}(x) \right)= w_h(x) \,,\quad x \in \Omh^{(k)} \,.
	\end{align}
	For a continuous function $w: \Om \to \R$, its inverse lift is defined by  $w^{-l} = w \circ \Phi_T^{(k)}$.
\end{definition}
The following lemma states that both the $L^2$-norm and the $H^1$-seminorm of functions on $\Omh^{(k)}$ and their lifts are equivalent.
\begin{proposition}\label{prop: norm equivalence}
	There exists a constant $c > 0$ independent of $h$ (but depending on $k$, $n$ and the geometry of $\Om$), such that for all $w_h: \Omh^{(k)} \to \R$
	\begin{align}
		\frac{1}{c} \lVert w_h \rVert_{L^2(\Omh^{(k)};\Gah^{(k)})} &\le \lVert w_h^l \rVert_{L^2(\Om;\Ga)} \le c \lVert w_h \rVert_{L^2(\Omh^{(k)};\Gah^{(k)})} \,,\\
		\frac{1}{c} \lVert \nb w_h \rVert_{L^2(\Omh^{(k)})} &\le \lVert \nb w_h^l \rVert_{L^2(\Om)} \le c \lVert \nb w_h \rVert_{L^2(\Omh^{(k)})} \,,\\
		\frac{1}{c} \| \nb_{\Gah} (\trh w_h) \|_{L^2(\Gah^{(k)})} &\le \| \nbg (\tr w_h^l) \|_{L^2(\Ga)} \le c \| \nb_{\Gah} (\trh w_h) \|_{L^2(\Gah^{(k)})} \,.
	\end{align}
\end{proposition}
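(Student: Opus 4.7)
The plan is to prove all three bounds element-by-element (and, for the tangential-gradient estimate, boundary-face-by-boundary-face), by pulling back to the reference simplex $\hatT$ via the two parametrizations $\Phi_T^{(k)}$ (which maps $\hatT$ onto $T^{(k)}\subset\Omh^{(k)}$) and $\Phi_T^c$ (which maps $\hatT$ onto the exact curved simplex $T^c\subset\Om$ on which $w_h^l$ lives). Since the curved simplices $\{T^c\}$ tile $\Om$ and the polynomial simplices $T^{(k)}$ tile $\Omh^{(k)}$ in a one-to-one correspondence indexed by $T\in\calTh^{(1)}$, summing over $T$ reduces the global claim to uniform local estimates on each such pair.

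The main technical input is a set of uniform Jacobian bounds. Quasi-uniformity of $\calTh^{(1)}$ gives $\|B_T\|\sim h$, $\|B_T^{-1}\|\sim h^{-1}$ and $|\det B_T|\sim h^n$; combining this with the derivative bound \eqref{eq: derivative bound rho} and with the fact that $\Phi_T^{(k)}$ is the degree-$k$ Lagrangian interpolant of $\Phi_T^c$, I would deduce, for $h\le h_0$ small enough,
\begin{align}
c_1\le\|D\Phi_T^{(k)}B_T^{-1}\|,\;\|D\Phi_T^c B_T^{-1}\|\le c_2 \quad\text{and}\quad |\det D\Phi_T^{(k)}|\sim|\det D\Phi_T^c|\sim h^n,
\end{align}
uniformly in $T$ and $h$, together with the analogous bounds on the inverse matrices. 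Given these, the first two claims follow from the standard change of variables: setting $\hat w:=w_h\circ\Phi_T^{(k)}=w_h^l\circ\Phi_T^c$ on $\hatT$, both $\int_{T^{(k)}}w_h^2\,\mathrm{d}y$ and $\int_{T^c}(w_h^l)^2\,\mathrm{d}x$ equal $\int_{\hatT}\hat w^2 |\det D\Phi|\,\mathrm{d}\hatx$ for the respective $\Phi$, and the ratio of the two Jacobian determinants is uniformly bounded above and below. For the gradient bound I would use the chain-rule identities $\nb w_h\circ\Phi_T^{(k)}=(D\Phi_T^{(k)})^{-\mathrm{T}}\nb\hat w$ and $\nb w_h^l\circ\Phi_T^c=(D\Phi_T^c)^{-\mathrm{T}}\nb\hat w$ and observe that the extra factors $|\det D\Phi|$ and $(D\Phi)^{-\mathrm{T}}(D\Phi)^{-1}$ produced by the change of variables are uniformly equivalent between the two parametrizations.

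For the tangential-gradient estimate, I would restrict $\Phi_T^{(k)}$ and $\Phi_T^c$ to the boundary face of $\hatT$ and work with the first fundamental forms $g^{(k)}$ and $g^c$ of the two induced surface parametrizations of $\tau_T\subset\Gah^{(k)}$ and of the corresponding face on $\Ga$; since $D\Phi_T^{(k)}$ and $D\Phi_T^c$ are uniformly equivalent, so are $g^{(k)}$ and $g^c$, their determinants, and their inverses, which gives the claim exactly as in the volumetric case. The main obstacle throughout is the Jacobian step: one must verify that the isoparametric map $\Phi_T^{(k)}$, which only interpolates the exact curved map $\Phi_T^c$ at Lagrange nodes, inherits a uniform analogue of \eqref{eq: derivative bound rho}, so that $D\Phi_T^{(k)}$ is invertible with uniformly bounded inverse for all $T$ and $h\le h_0$. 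This is precisely the regularity analysis of the isoparametric parametrization carried out in \cite{ER13}, which I would cite rather than redo.
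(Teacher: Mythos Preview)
Your proposal is correct and is essentially an explicit sketch of the arguments that the paper defers to its references: the paper's own proof consists only of the citation ``See \cite[Proposition 4.9]{ER13} for the bulk estimate and \cite{demlow2009} for the estimate on the boundary'', and your element-by-element change-of-variables argument, driven by the uniform Jacobian bounds for $\Phi_T^c$ and $\Phi_T^{(k)}$, is precisely how those references establish the result. The one small addition worth making is to mention \cite{demlow2009} alongside \cite{ER13} for the surface part, since the uniform equivalence of the first fundamental forms on the boundary faces (and hence the tangential-gradient estimate) is the surface-finite-element analysis carried out there.
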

\begin{proof}
	See \cite[Proposition 4.9]{ER13} for the bulk estimate and \cite{demlow2009} for the estimate on the boundary.
\end{proof}

\section{The isoparametric finite element method}
In this section we introduce the finite element method. We use piecewise polynomial finite element functions of degree $k$, which leads to isoparametric finite elements. Isoparametric finite elements are also used in \cite{ER13} in the context of bulk--surface equations; the traces of isoparametric bulk finite element functions on the boundary can be considered as surface finite elements, see e.g. \cite{DE07a,DE13acta}.

From now on, we write $\Omh$ and $\Gah$ instead of $\Omh^{(k)}$ and $\Gah^{(k)}$. We collect the nodes $x_1,\ldots,x_N \in \R^n$ of the triangulation in a vector $\bfx = (x_1,\ldots,x_N) \in \R^{n N}$ such that exactly the first $N_\Ga$ nodes $x_1,\ldots,x_{N_\Ga}$ lie on $\Ga$. We use Lagrangian basis functions $\varphi_1,\ldots,\varphi_N$, which are defined elementwise such that their pullback to the reference element is polynomial of degree $k$. The basis functions satisfy the property $\varphi_j(x_k) = \delta_{jk}$ for $1 \le j,k \le N$. The finite element space is then defined as
\begin{align}
	V_h = \mathrm{span}\left\{ \varphi_1,\ldots,\varphi_N \right\} \,.
\end{align}
Recall that, as opposed to \cite{KCDQ15}, the finite element space $V_h$ is \emph{not} contained in $V = H^1(\Om;\Ga)$. The right-hand side functions are approximated with appropriate functions $f_h: \Omh \to \R$ and $g_h: \Gah \to \R$. If $f$ and $g$ are continuous, one could use the inverse lifts or the finite element interpolations, for example.

We use the following discrete analogues of the bilinear forms defined in \eqref{def: bilinear forms}:
\begin{align}\label{def: discrete bilinear forms}
	m_h^\Om(u_h,v_h) &= \int_{\Omh} u_h v_h \dx \,,\\
	a_h^\Om(u_h,v_h) &= \int_{\Omh} \nb u_h \cdot \nb v_h \dx \,,\\
	m_h^\Ga(u_h,v_h) &= \int_{\Gah} (\trh u_h) (\trh v_h) \dS_h \,, \\
	a_h^\Ga(u_h,v_h) &= \int_{\Gah} \nb_{\Gah} (\trh u_h) \cdot \nb_{\Gah} (\trh v_h) \dS_h \,,\\
	a_h(u_h,v_h) &= a_h^\Om(u_h,v_h) + \C m_h^\Om(u_h,v_h) + \alpha m_h^\Ga(u_h,v_h) + \beta a_h^\Ga(u_h,v_h) \,.
\end{align}
Here, $\trh$ denotes the discrete trace operator on $\Gah$, $\dS_h$ denotes the discrete surface measure on $\Gah$ (see \cite{ER13,demlow2009,DE13acta} for further details).  Moreover, we denote
\begin{align}
	\ell_h(\varphi_h) = \int_{\Omh} f_h \varphi_h \dx + \int_{\Gah} g_h (\trh \varphi_h) \dS_h \,.
\end{align}
The bilinear forms are defined on $V_h \times V_h$ and $\ell_h$ is defined on $V_h$.

The discretized formulation of \eqref{eq: variational form} now reads: given $f_h, g_h \in V_h$, find $u_h \in V_h$ such that
\begin{align}\label{eq: discrete problem}
	a_h(u_h,\varphi_h) = \ell_h(\varphi_h)
\end{align}
for all $\varphi_h \in V_h$. Since $a_h$ is coercive and bounded  and $V_h$ is a (finite-dimensional) Hilbert space, we get existence and uniqueness of the discrete solution by the Lax-Milgram lemma. 

\subsection{Matrix--vector formulation}
We derive a matrix--vector formulation of the discretized problem. First, we note that \eqref{eq: discrete problem} is equivalent to: find $u_h \in V_h$ such that
\begin{align}
	a_h(u_h,\varphi_j) = \ell_h(\varphi_j)
\end{align}
for all basis functions $\varphi_j$, $j=1, \ldots, N$. The functions $f_h$ and $g_h$, which are assumed to be finite element functions, can be written as $f_h(\cdot) = \sum_{j=1}^N f_h(x_j) \varphi_j(\cdot)$, $g_h(\cdot) = \sum_{j=1}^{N_\Ga} g_h(x_j) \varphi_j(\cdot)$. We collect the nodal values in vectors
\begin{align}
	\bff = (f_h(x_j))_{j=1}^N \,,\quad \bfg= (g_h(x_j))_{j=1}^{N_\Ga} \,.
\end{align}
We define the bulk and surface mass and stiffness matrices:
\begin{align}
	(\bfM_\Om)_{jk} &= \int_{\Omh} \varphi_j \varphi_k \dx \,,\\
	(\bfA_\Om)_{jk} &= \int_{\Omh} \nb \varphi_j \cdot \nb \varphi_k \dx \,, \quad 1 \le j,k \le N \,,\\
	(\bfM_\Ga)_{jk} &= \int_{\Gah} (\trh \varphi_j)  (\trh \varphi_k) \dS_h \,,\\
	(\bfA_\Ga)_{jk} &= \int_{\Gah} \nb_{\Gah} (\trh \varphi_j) \cdot \nb_{\Gah} (\trh \varphi_k) \dS_h \,,
	\quad 1 \le j,k \le N_\Ga \,.\\
\end{align}
We introduce the matrix $\bftr = (I_{N_\Ga} , 0) \in \R^{N_\Ga \times N}$, where $I_{N_\Ga}$ denotes the identity matrix of size $N_\Ga \times N_\Ga$. For a finite element function $w_h$ with nodal values collected in a vector $\bfw$, $\bftr \bfw \in \R^{N_\Ga}$ is the vector of the nodal values on the boundary nodes.

\begin{proposition}
	Let $u_h(\cdot) = \sum_{j=1}^N u_j \varphi_j(\cdot) \in V_h$ denote the finite element solution to \eqref{eq: discrete problem} and $\bfu = (u_j)_{j=1}^N$ the vector of nodal values. Then the spatially discretized problem \eqref{eq: discrete problem} is equivalent to the linear system
	\begin{align}\label{eq: matrix vector}
		 \bfK \bfu = \bfb \,,
	\end{align}
	where $\bfK = \bftr\T (\alpha \bfM_\Ga + \beta \bfA_\Ga) \bftr + \C \bfM_\Om + \bfA_\Om$ and $\bfb = \bfM_\Om \bff + \bftr\T \bfM_\Ga \bfg$.
\end{proposition}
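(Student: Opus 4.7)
The plan is standard bookkeeping: expand the discrete solution and the test function in the nodal basis, use bilinearity of $a_h$ and linearity of $\ell_h$, and then identify the resulting coefficients as entries of the matrices $\bfM_\Om, \bfA_\Om, \bfM_\Ga, \bfA_\Ga$. First I would observe that \eqref{eq: discrete problem} is equivalent to requiring $a_h(u_h,\varphi_i) = \ell_h(\varphi_i)$ for every basis function $\varphi_i$, $i=1,\dots,N$. Substituting $u_h = \sum_{j=1}^N u_j \varphi_j$ and pulling the sum out, the left-hand side becomes $\sum_{j=1}^N a_h(\varphi_j,\varphi_i)\, u_j$, i.e. the $i$-th entry of $\widetilde{\bfK}\bfu$ where $\widetilde{\bfK}_{ij} = a_h(\varphi_j,\varphi_i)$. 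By the definition of $a_h$ in \eqref{def: discrete bilinear forms}, this splits as a sum of the bulk contributions $(\bfA_\Om)_{ij} + \C(\bfM_\Om)_{ij}$ and the boundary contributions $\alpha\,m_h^\Ga(\varphi_j,\varphi_i) + \beta\,a_h^\Ga(\varphi_j,\varphi_i)$.

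The only point that requires a short argument is the structural identification of the boundary block with $\bftr\T(\alpha\bfM_\Ga+\beta\bfA_\Ga)\bftr$. Here I would use the Lagrangian property of the basis: for any node $x_j$ with $j>N_\Ga$, the trace $\trh\varphi_j$ on each face $\tau\subset\Gah$ is a polynomial of degree $k$ on $\tau$ whose value at every Lagrangian node of $\tau$ is zero, hence $\trh\varphi_j \equiv 0$ on $\Gah$. Consequently both $m_h^\Ga(\varphi_j,\varphi_i)$ and $a_h^\Ga(\varphi_j,\varphi_i)$ vanish unless $i,j \le N_\Ga$, and for $i,j\le N_\Ga$ they agree with $(\bfM_\Ga)_{ij}$ and $(\bfA_\Ga)_{ij}$ respectively. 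The operator $\bftr\T M\bftr$, for any $M\in\R^{N_\Ga\times N_\Ga}$, has precisely this block structure (upper-left $N_\Ga\times N_\Ga$ block equal to $M$ and zero elsewhere), so the boundary contribution to $\widetilde{\bfK}$ is exactly $\bftr\T(\alpha\bfM_\Ga+\beta\bfA_\Ga)\bftr$.

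For the right-hand side I would argue analogously. Writing $f_h = \sum_{j=1}^N f_h(x_j)\varphi_j$ gives
\begin{align}
\int_{\Omh} f_h\,\varphi_i \dx = \sum_{j=1}^N (\bfM_\Om)_{ij}\, f_h(x_j) = (\bfM_\Om \bff)_i,
\end{align}
and writing $g_h = \sum_{j=1}^{N_\Ga} g_h(x_j)\varphi_j$ together with $\trh\varphi_i = 0$ for $i>N_\Ga$ yields
\begin{align}
\int_{\Gah} g_h\,(\trh \varphi_i) \dS_h = \sum_{j=1}^{N_\Ga} (\bfM_\Ga)_{ij}\, g_h(x_j)
\end{align}
for $i\le N_\Ga$ and zero otherwise, i.e. the $i$-th entry of $\bftr\T \bfM_\Ga \bfg$. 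Summing the two contributions produces $\bfb = \bfM_\Om \bff + \bftr\T \bfM_\Ga \bfg$. Combining everything, $a_h(u_h,\varphi_i)=\ell_h(\varphi_i)$ for all $i$ is equivalent to $\bfK\bfu=\bfb$.

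There is no real obstacle in this proof; the only mild subtlety is the trace-vanishing statement for interior-indexed basis functions, which I would justify explicitly from the Lagrangian interpolation property on each boundary face rather than taking it for granted. Everything else is direct verification.
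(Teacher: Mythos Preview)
Your proposal is correct and is precisely the ``linearity and direct computation'' that the paper invokes as its entire proof; you have simply spelled out the details (in particular the trace-vanishing of interior basis functions and the block-structure identification with $\bftr\T M\bftr$) that the paper leaves implicit.
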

\begin{proof}
	Follows from linearity and a direct computation.
\end{proof}
The following properties of $\bfK$ are needed in the error analysis.
\begin{lemma}\label{lemma: K norm}
	For a finite element function $w_h = \sum_{j=1}^N w_j \varphi_j$ with corresponding nodal vector $\bfw \in \R^n$, the $a_h$-norm of $w_h$, defined by $\| w_h \|_{a_h} = (a_h(w_h,w_h))^{1/2} = ( \bfw \T \bfK \bfw)^{1/2}$ and the $H^1(\Omh,\Gah)$-norm are equivalent.
\end{lemma}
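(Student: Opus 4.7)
The plan is to argue the equivalence by bounding the four individual contributions to $\|w_h\|_{a_h}^2$ and to $\|w_h\|_{H^1(\Omh;\Gah)}^2$ against each other. The definitions give
\begin{align}
  \| w_h \|_{a_h}^2 = \| \nb w_h \|_{L^2(\Omh)}^2 + \C \| w_h \|_{L^2(\Omh)}^2 + \alpha \| \trh w_h \|_{L^2(\Gah)}^2 + \beta \| \nb_{\Gah} \trh w_h \|_{L^2(\Gah)}^2 ,
\end{align}
so the upper bound $\| w_h \|_{a_h} \le c \| w_h \|_{H^1(\Omh;\Gah)}$ is immediate with $c = \max(1,\C,\alpha,\beta)^{1/2}$, directly from the definition~\eqref{eq: norm} of the $H^1(\Omh;\Gah)$-norm.

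For the opposite direction, three of the four ingredients of $\| w_h \|_{H^1(\Omh;\Gah)}^2$ are controlled trivially because $\alpha,\beta>0$: the bulk gradient by $\| w_h \|_{a_h}^2$ itself, the boundary $L^2$-term by $\alpha^{-1}\| w_h \|_{a_h}^2$, and the tangential gradient by $\beta^{-1}\| w_h \|_{a_h}^2$. The only genuine issue is the bulk $L^2$-term $\| w_h \|_{L^2(\Omh)}^2$, which is not bounded by $\| w_h \|_{a_h}^2$ when $\C=0$. The plan is therefore to establish a Poincaré--Friedrichs-type inequality
\begin{align}
  \| w_h \|_{L^2(\Omh)}^2 \le c\bigl( \| \nb w_h \|_{L^2(\Omh)}^2 + \| \trh w_h \|_{L^2(\Gah)}^2 \bigr)
\end{align}
with a constant $c$ independent of $h$.

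To prove this inequality, I would lift $w_h$ to $w_h^l \in H^1(\Om;\Ga)$ and transfer the question to the fixed exact domain $\Om$. On $\Om$, the standard Poincaré--Friedrichs inequality
\begin{align}
  \| v \|_{L^2(\Om)}^2 \le c_{\Om}\bigl( \| \nb v \|_{L^2(\Om)}^2 + \| \tr v \|_{L^2(\Ga)}^2 \bigr) , \qquad v \in H^1(\Om) ,
\end{align}
holds with a constant $c_\Om$ depending only on the geometry of $\Om$ (indeed the right-hand side vanishes only for $v\equiv 0$, and the inequality is a consequence of the compactness of the trace embedding $H^1(\Om) \hookrightarrow L^2(\Ga)$). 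Applying this to $v = w_h^l$ and then invoking the norm equivalences of Proposition~\ref{prop: norm equivalence} on both sides transfers the inequality back to $\Omh$ and $\Gah$ with an $h$-independent constant.

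The only potential obstacle is making sure the Poincaré constant does not degenerate as $h\to 0$, and the lift technique outlined above is precisely what handles this uniformly. Combining the three trivial bounds with the Poincaré--Friedrichs inequality yields $\| w_h \|_{H^1(\Omh;\Gah)} \le c \| w_h \|_{a_h}$, completing the equivalence.
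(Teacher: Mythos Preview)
Your argument is correct, and in fact it is more careful than the paper's own proof. The paper simply sets $c_1=\min(\alpha,\beta,\kappa,1)$ and $c_2=\max(\alpha,\beta,\kappa,1)$ and records the sandwich inequality $c_1\|w_h\|_{a_h}\le\|w_h\|_{H^1(\Omh;\Gah)}\le c_2\|w_h\|_{a_h}$, i.e.\ it treats the two quadratic forms as the same four nonnegative terms with different positive weights. This is perfectly adequate when $\kappa>0$, but it degenerates in the admissible case $\kappa=0$ (then $c_1=0$ and one direction of the equivalence is lost). Your route via a Poincar\'e--Friedrichs inequality on the exact domain~$\Om$, transported to $\Omh$ through the lift and Proposition~\ref{prop: norm equivalence}, supplies precisely the missing control of $\|w_h\|_{L^2(\Omh)}$ by $\|\nabla w_h\|_{L^2(\Omh)}+\|\trh w_h\|_{L^2(\Gah)}$ with an $h$-independent constant. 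So the paper's approach buys brevity when all four coefficients are strictly positive; your approach buys validity over the full parameter range $\kappa\ge 0$ stated in the paper.
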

\begin{proof}
	For $\alpha=\beta=\kappa=1$, we have $\| w_h \|_{a_h} = \| w_h \|_{H^1(\Omh;\Gah)}$. In the general case, denote $c_1=\min(\alpha,\beta,\kappa,1)$ and $c_2 = \max(\alpha,\beta,\kappa,1)$ and we have
	\begin{align}
		c_1 \| w_h \|_{a_h} \le \| w_h \|_{H^1(\Omh;\Gah)} \le c_2 \| w_h \|_{a_h} \,.
	\end{align}
\end{proof}

\begin{remark}
	If the right-hand side functions $f$ and $g$ are not approximated with finite element functions, the vector $\bfb$ in \eqref{eq: matrix vector} is defined by integrals over $\Omega$ and $\Gamma$, which then have to be approximated with quadrature rules. In this paper, we do not intend analyzing these numerical integration errors and therefore assume that $f$ and $g$ are approximated with finite element functions $f_h$ and $g_h$. This is not fully practical for $f \in L^2(\Om)$, $g \in L^2(\Ga)$, cf. \cite{Dzi88,ER13}. We will carefully carry out the error analysis such that this approximation error is taken into account. If $f$ and $g$ are continuous, $f_h$ and $g_h$ can be chosen as finite element interpolations of $f$ and $g$, and provided that $f$ and $g$ are sufficiently regular this interpolation error is of the same order as the order of the finite element method.
\end{remark}

\begin{definition}
	For a function $w \in H^2(\Om)$, its finite element interpolation $\widetilde{I}_h w \in V_h$ is given by
	\begin{align}
		\widetilde{I}_h w (\cdot) = \sum_{j=1}^N w(x_j) \varphi_j(\cdot) \,.
	\end{align}
	The lifted finite element interpolation $I_h w: \Om \to \R$ is then defined as
	\begin{align}
		I_h w = \left( \widetilde{I}_h w \right)^l \,.
	\end{align}
\end{definition}
Note that since $n \in \{2,3\}$, we have $H^2(\Om) \subset C^0(\Om)$, so the pointwise evaluation is well-defined. The following two approximation properties are crucial in order to prove optimal-order error bounds with respect to the regularity of the exact solution.
\begin{proposition}\label{prop: interpolation}
	Let $k \ge 1$. There exists a constant $c$ independent of $h$ and $j$, such that for all $2 \le j \le k+1$
	\begin{align}\label{eq: approximation property}
		\lVert w- I_h w \rVert_{L^2(\Om;\Ga)} &\le c h^j \lVert w \rVert_{H^j(\Om;\Ga)} \,,\\
		\lVert w - I_h w \rVert_{H^1(\Om;\Ga)} &\le c h^{j-1} \lVert w \rVert_{H^j(\Om;\Ga)} 
	\end{align}
	for all $w \in H^j(\Om;\Ga)$. 
\end{proposition}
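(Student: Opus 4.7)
The plan is to split the combined norm according to its definition,
\[
\| w - I_h w \|_{H^s(\Om;\Ga)}^2 = \| w - I_h w \|_{H^s(\Om)}^2 + \| \tr w - \tr(I_h w) \|_{H^s(\Ga)}^2, \qquad s \in \{0,1\},
\]
and to estimate the bulk part and the boundary part separately, using in each case a standard isoparametric interpolation estimate that is already available in the literature, together with the norm equivalence of Proposition~\ref{prop: norm equivalence} to pass back and forth between $\Om$ and $\Omh$.

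For the bulk contribution, I would first transfer the estimate to $\Omh$: since $I_h w = (\widetilde I_h w)^l$, Proposition~\ref{prop: norm equivalence} gives
\[
\| w - I_h w \|_{H^s(\Om)} \le c \| w^{-l} - \widetilde I_h w \|_{H^s(\Omh)}.
\]
The key observation is that, by construction of the isoparametric map $\Phi_T^{(k)}$ and the Lagrangian property $\Phi_T^{(k)}(\hatx^j) = \Phi_T^c(\hatx^j)$, evaluating $w$ at the nodes $x_j$ coincides with evaluating $w^{-l}$ at the corresponding reference nodes, so $\widetilde I_h w$ may be identified with the standard Lagrange interpolant of $w^{-l}$ of degree $k$ on $\calTh^{(k)}$. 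One can then invoke the isoparametric Bramble–Hilbert estimate (see e.g.~Bernardi~\cite{Ber89} or \cite[Prop.~4.7]{ER13}), which yields
\[
\| w^{-l} - \widetilde I_h w \|_{H^s(\Omh)} \le c\, h^{j-s} \| w^{-l} \|_{H^j(\Omh)}
\]
for $s \in \{0,1\}$ and $2 \le j \le k+1$. A final application of Proposition~\ref{prop: norm equivalence} (extended to higher-order norms via the bounds on the derivatives of $\Phi_T^{(k)}$, also found in~\cite{ER13}) bounds $\| w^{-l} \|_{H^j(\Omh)}$ by $c \| w \|_{H^j(\Om)}$.

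For the boundary contribution, I would use that the isoparametric construction maps each face $\tau_T \subset \Gah^{(1)}$ onto a polynomial surface patch whose nodes lie exactly on $\Ga$, so $\tr (I_h w)$ is precisely the lifted $k$-th order surface Lagrange interpolant of $\tr w$ on $\Gah^{(k)}$. The surface interpolation estimates of Demlow~\cite{demlow2009} then provide, for $s \in \{0,1\}$ and $2 \le j \le k+1$,
\[
\| \tr w - \tr(I_h w) \|_{H^s(\Ga)} \le c\, h^{j-s} \| \tr w \|_{H^j(\Ga)}.
\]
Summing the squared bulk and boundary bounds and taking the square root delivers the two claimed inequalities.

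The main subtlety, and therefore the step I would spell out most carefully, is the identification of $\widetilde I_h w$ with the standard Lagrange interpolant of $w^{-l}$ on $\calTh^{(k)}$: the definition in the excerpt uses the values $w(x_j)$ at the nodes of $\Omh$, which lie in $\uDelt$ rather than in $\Om$ itself, so one has to argue (via the Lagrange property of $\Phi_T^{(k)}$) that evaluating $w$ at $x_j = \Phi_T^{(k)}(\hatx^j)$ is equivalent to evaluating the inverse lift $w^{-l}$ at $\hatx^j$, which is what makes the standard reference-element interpolation theory directly applicable.
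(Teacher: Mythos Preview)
Your proposal is correct and follows precisely the route the paper takes: the paper's own proof is the single line ``See \cite[Corollary 4.1]{Ber89} and \cite{demlow2009}'', i.e.\ the bulk interpolation estimate comes from Bernardi's isoparametric theory and the boundary estimate from Demlow's surface finite element analysis, exactly as you outline. Your splitting into bulk and surface parts and the identification of $\widetilde I_h w$ with the Lagrange interpolant of $w^{-l}$ via the nodal coincidence $\Phi_T^{(k)}(\hatx^j)=\Phi_T^c(\hatx^j)$ is the right mechanism; the only minor inaccuracy is your remark that the nodes ``lie in $\uDelt$ rather than in $\Om$ itself'': by construction $\Phi_T^c(\hatx^j)\in\overline{\Om}$, so the nodes do belong to the exact closed domain and $w(x_j)$ is directly well defined.
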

\begin{proof}
	See \cite[Corollary 4.1]{Ber89} and \cite{demlow2009}.
\end{proof}

\begin{proposition}\label{prop: bilinear form errors}
	For any $u_h, w_h \in V_h$ with lifts $u_h^l, w_h^l \in V_h^l \subset H^1(\Om)$, we have the following estimates:
	\begin{align}
		\left| m_h^\Om(u_h,w_h) - m^\Om(u_h^l,w_h^l) \right| &\le c h^{k} \| u_h^l \|_{L^2(\Om)} \| w_h^l \|_{L^2(\Om)} \,, \\
		\left| m_h^\Om(u_h,w_h) - m^\Om(u_h^l,w_h^l) \right| &\le c h^{k+1} \| u_h^l \|_{H^1(\Om)} \| w_h^l \|_{H^1(\Om)} \,, \\
		\left| a_h^\Om(u_h,w_h) - a^\Om(u_h^l,w_h^l) \right| &\le c h^{k} \| u_h^l \|_{H^1(\Om)} \| w_h^l \|_{H^1(\Om)} \,.
	\end{align}
	The traces of $u_h, w_h$ on $\Gah$ and their lifts on $\Ga$ satisfy
	\begin{align}
		\left| m_h^\Ga(u_h,w_h) - m^\Ga(u_h^l,w_h^l) \right| &\le c h^{k+1} \| u_h^l \|_{L^2(\Ga)} \| w_h^l \|_{L^2(\Ga)} \,,\\
		\left| a_h^\Ga(u_h,w_h) - a^\Ga(u_h^l,w_h^l) \right| &\le c h^{k+1} \| \nbg u_h^l \|_{L^2(\Ga)} \| \nbg w_h^l \|_{L^2(\Ga)} \,.
	\end{align}
	For $u, w \in H^2(\Om)$ with inverse lifts $u^{-l}, w^{-l}$, we have
	\begin{align}
		\left| a_h^{\Om} (u^{-l},w^{-l}) - a^\Om(u,w) \right| \le c h^{k+1} \| u \|_{H^2(\Om)} \| w \|_{H^2(\Om)} \,.
	\end{align}
\end{proposition}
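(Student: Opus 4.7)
The unifying strategy is an element-wise change of variables: on each $T \in \calTh^{(1)}$ one uses the lift map $\mathcal{L}_T = \Phi_T^c \circ (\Phi_T^{(k)})^{-1}: T^{(k)} \to T^c$ to pull the integrals defining $m^\Om$, $a^\Om$ back from $\Om$ to $\Omh$, and the analogous surface map to pull $\Ga$-integrals back to $\Gah$. Each difference $m_h^\bullet - m^\bullet$, $a_h^\bullet - a^\bullet$ then becomes an integral over the discrete domain weighted by a geometric error factor: the scalar Jacobian error $1 - J_{\mathcal{L}}$ for mass terms; the tensor error $I - J_{\mathcal{L}} D\mathcal{L}^{-1} D\mathcal{L}^{-T}$ for stiffness terms, obtained after applying the chain rule $(\nb u_h^l)\circ\mathcal{L} = D\mathcal{L}^{-T}\nb u_h$; and the corresponding surface analogues. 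These factors vanish on interior elements, and on boundary elements satisfy the sharp pointwise bounds of \cite{ER13} and \cite{demlow2009}: of order $h^k$ in the bulk and $h^{k+1}$ on the surface. A key auxiliary observation is that each such difference is supported only on the strip $\Omh^\partial$ of boundary elements, whose volume is $O(h)$.

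For the bulk mass, the difference equals $\int_{\Omh} u_h w_h (1 - J_{\mathcal{L}}) \dx$. Cauchy--Schwarz with $\|1-J_{\mathcal{L}}\|_{L^\infty} \le c h^k$ and Proposition~\ref{prop: norm equivalence} give the first estimate. The sharper $h^{k+1}$ bound in $H^1$ norms is obtained by exploiting the strip support via the trace-type inequality $\|v\|_{L^2(\Om^\partial)} \le c\sqrt{h}\,\|v\|_{H^1(\Om)}$ for $v \in H^1(\Om)$, applied to $v = u_h^l$ and $v = w_h^l$, which supplies the extra factor $h^{1/2} \cdot h^{1/2}$. The bulk stiffness estimate is a direct Cauchy--Schwarz against the $h^k$-bounded tensor $I - J_{\mathcal{L}} D\mathcal{L}^{-1} D\mathcal{L}^{-T}$; the strip-trace improvement is unavailable here, since it would require $\nb u_h^l \in H^1$, which FE functions do not satisfy. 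The two surface estimates proceed analogously on $\Gah$ and benefit directly from the stronger $h^{k+1}$ surface geometric bound of \cite{demlow2009}.

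The final estimate, for $u,w \in H^2(\Om)$, combines both ideas and is the main obstacle. After change of variables it becomes
\[
a_h^\Om(u^{-l}, w^{-l}) - a^\Om(u, w) = \int_{\Omh} (\nb u)(\mathcal{L}(x))^T \bigl[ D\mathcal{L}(x) D\mathcal{L}(x)^T - J_{\mathcal{L}}(x) I \bigr] (\nb w)(\mathcal{L}(x)) \dx.
\]
The bracket is $O(h^k)$ in $L^\infty$ and supported on $\Omh^\partial$. Cauchy--Schwarz followed by a transform back to $\Om$ bounds the integral by $c h^k \|\nb u\|_{L^2(\Om^\partial)} \|\nb w\|_{L^2(\Om^\partial)}$, and now the strip-trace inequality \emph{does} apply to $\nb u$ and $\nb w$ themselves, precisely because $u, w \in H^2(\Om)$ implies $\nb u, \nb w \in H^1(\Om)$. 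This supplies the missing factor $h^{1/2} \cdot h^{1/2}$ and yields the desired $h^{k+1}\|u\|_{H^2(\Om)}\|w\|_{H^2(\Om)}$ bound. The hard step is exactly this strip-trace application, which also explains why the $H^2$ regularity is essential here while the bulk stiffness estimate for FE functions is stuck at order $h^k$.
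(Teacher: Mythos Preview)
Your proposal is correct and reproduces precisely the argument that the paper defers to: the paper's own proof is simply a reference to \cite[Lemma 7.15]{ER17pre} and \cite[Lemma 6.2]{ER13}, and your change-of-variables plus geometric-error-factor approach, combined with the boundary-strip trace inequality to gain the extra power of $h$ when $H^1$ or $H^2$ regularity is available, is exactly the mechanism used there. In particular, your identification of the key step---that the $h^{k+1}$ bulk stiffness bound requires $\nabla u, \nabla w \in H^1(\Om)$ so that the strip-trace estimate applies to the gradients, whereas for finite element functions one is stuck at $h^k$---matches the reasoning in those references.
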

\begin{proof}
	See \cite[Lemma 7.15]{ER17pre} or in the proof of \cite[Lemma 6.2]{ER13}.
\end{proof}

\section{Error analysis}
In this section, we analyze the error of the isoparametric finite element method. Since the exact solution and the numerical solution are defined on different domains, we cannot compare them directly. Instead, we compare the exact solution to the lift of the numerical solution. We derive optimal-order error estimates for finite elements of arbitrary order $k \ge 1$,  with respect to both the regularity of the solution and the approximation of the data.

We begin by stating the main results of this paper. The proof of the following theorems follows down below and is clearly separated into stability and consistency.

\subsection{Statement of the main result}
\begin{thm}\label{theo: H1 estimate}
	Let $j \ge 1$ be a natural number, $f \in H^{j-1}(\Om)$, $g \in H^{j-1}(\Ga)$, let $u \in H^{j+1}(\Om;\Ga)$ be the solution of \eqref{eq: variational form}. Denote by $u_h : \Omh^{(k)} \to \R$ the numerical solution to \eqref{eq: discrete problem} computed with isoparametric finite elements of order $k \ge 1$, $f_h$ and $g_h$ approximations to $f$ and $g$. Then, the error between the exact solution and the lifted finite element solution is bounded by
	\begin{align}
		\lVert u - u_h^l \rVert_{H^1(\Om;\Ga)} \le C h^{\min(k,j)} + c \lVert f - f_h^l \rVert_{L^2(\Om)} + c \lVert g-g_h^l \rVert_{L^2(\Ga)} \,,
	\end{align}
	where $C$ depends on $\lVert f \rVert_{L^2(\Om)}$, $\lVert g \rVert_{L^2(\Ga)}$ and $\lVert u \rVert_{H^{\min(k,j)+1}(\Om;\Ga)}$.
	
	In particular: If $j \ge k$ and $f_h$ and $g_h$ are chosen such that $\lVert f- f_h^l \rVert_{L^2(\Om)} \le c h^k$ and $\lVert g-g_h^l \rVert_{L^2(\Ga)} \le c h^k$, then the error is bounded by
	\begin{align}\label{eq: optimal order H1}
		\lVert u -u_h^l \rVert_{H^1(\Om;\Ga)} \le C h^k \,,
	\end{align}
	where $C$ depends on the regularity of $f$ and $g$ and on $\lVert u \rVert_{H^{k+1}(\Om;\Ga)}$.
\end{thm}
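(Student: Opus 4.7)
The plan is to run a nonconforming Céa/Strang-type argument: split the error via the lifted interpolant and bound the remaining discrete part by a stability-plus-consistency estimate in the $a_h$-norm.

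First I would write
\[
\|u - u_h^l\|_{H^1(\Om;\Ga)} \le \|u - I_h u\|_{H^1(\Om;\Ga)} + \|I_h u - u_h^l\|_{H^1(\Om;\Ga)},
\]
and dispatch the first summand via Proposition \ref{prop: interpolation} with the index $\min(k,j)+1$, yielding $c h^{\min(k,j)}\|u\|_{H^{\min(k,j)+1}(\Om;\Ga)}$. For the second summand I would pass to the inverse lift $\tilde\theta_h := \widetilde I_h u - u_h \in V_h$, use the norm equivalence of Proposition \ref{prop: norm equivalence} to control $\|I_h u - u_h^l\|_{H^1(\Om;\Ga)}$ by $\|\tilde\theta_h\|_{H^1(\Omh;\Gah)}$, and then use Lemma \ref{lemma: K norm} to control this in turn by $\|\tilde\theta_h\|_{a_h}$. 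Thus the task reduces to bounding $\|\tilde\theta_h\|_{a_h}$.

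For the stability step I would compute, using $a_h(u_h,\tilde\theta_h) = \ell_h(\tilde\theta_h)$ and $a(u,\tilde\theta_h^l) = \ell(\tilde\theta_h^l)$,
\[
\|\tilde\theta_h\|_{a_h}^{\,2} = a_h(\widetilde I_h u,\tilde\theta_h) - \ell_h(\tilde\theta_h)
= \underbrace{\bigl[a_h(\widetilde I_h u,\tilde\theta_h) - a(I_h u,\tilde\theta_h^l)\bigr]}_{E_1}
+ \underbrace{a(I_h u - u,\tilde\theta_h^l)}_{E_2}
+ \underbrace{\bigl[\ell(\tilde\theta_h^l) - \ell_h(\tilde\theta_h)\bigr]}_{E_3}.
\]
The term $E_1$ is the geometric consistency error: I would apply Proposition \ref{prop: bilinear form errors} term-by-term to $a^\Om, m^\Om, m^\Ga, a^\Ga$ to bound it by $c h^k \|I_h u\|_{H^1(\Om;\Ga)}\|\tilde\theta_h^l\|_{H^1(\Om;\Ga)}$, and use the interpolation estimate plus a triangle inequality to replace $\|I_h u\|_{H^1(\Om;\Ga)}$ by $\|u\|_{H^2(\Om;\Ga)}$ (absorbing $h$-small terms). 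The middle term $E_2$ is handled by boundedness of $a(\cdot,\cdot)$ on $H^1(\Om;\Ga)$ combined with Proposition \ref{prop: interpolation}, giving $c h^{\min(k,j)}\|u\|_{H^{\min(k,j)+1}(\Om;\Ga)}\|\tilde\theta_h^l\|_{H^1(\Om;\Ga)}$.

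The trickiest piece, and what I would flag as the main obstacle, is the data term $E_3$: it must produce precisely the explicit factors $\|f-f_h^l\|_{L^2(\Om)}$ and $\|g-g_h^l\|_{L^2(\Ga)}$ cleanly separated from the geometric error, with the geometric error producing only a factor of $\|f\|_{L^2(\Om)} + \|g\|_{L^2(\Ga)}$ (so that the final constant depends only on norms the theorem allows). To this end I would write
\[
\int_\Om f\,\tilde\theta_h^l \dx - \int_{\Omh} f_h\,\tilde\theta_h \dx
= \int_\Om (f-f_h^l)\,\tilde\theta_h^l \dx + \bigl[m^\Om(f_h^l,\tilde\theta_h^l) - m_h^\Om(f_h,\tilde\theta_h)\bigr],
\]
and analogously on $\Ga$. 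The first summand is bounded by $\|f-f_h^l\|_{L^2(\Om)}\|\tilde\theta_h^l\|_{L^2(\Om)}$, and the second by Proposition \ref{prop: bilinear form errors} by $c h^k\|f_h^l\|_{L^2(\Om)}\|\tilde\theta_h^l\|_{L^2(\Om)} \le c h^k(\|f\|_{L^2(\Om)} + \|f-f_h^l\|_{L^2(\Om)})\|\tilde\theta_h\|_{a_h}$. Combining all of $E_1, E_2, E_3$, using norm equivalence on the factor $\|\tilde\theta_h^l\|_{H^1(\Om;\Ga)}$, dividing by $\|\tilde\theta_h\|_{a_h}$, and adding the interpolation contribution from the first paragraph gives the claimed bound; the optimal-order statement then follows by choosing $j\ge k$ and using the stated hypotheses on $f_h$, $g_h$.
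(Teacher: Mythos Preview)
Your argument is correct and coincides with the paper's proof in all essential respects: the same splitting through the lifted interpolant, the same use of Propositions \ref{prop: norm equivalence}, \ref{prop: interpolation}, \ref{prop: bilinear form errors} and Lemma \ref{lemma: K norm}, and the same decomposition of the consistency residual into geometric error, interpolation error, and data error (your $E_1,E_2,E_3$ correspond to the paper's terms (i)--(iv) and (v), respectively, just regrouped). The only cosmetic difference is that the paper casts the stability step in matrix--vector language, introducing a defect vector $\bfd$ and a dual norm $\|\bfd\|_\star$, and proves $\|\bfe\|_{\bfK}\le\|\bfd\|_\star$; but testing $a_h(\tilde\theta_h,\tilde\theta_h)$ directly, as you do, is exactly the choice $\varphi_h=\tilde\theta_h$ in that dual-norm supremum, so the two presentations are equivalent.
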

\begin{remark}
	The assumptions in the second part of Theorem~\ref{theo: H1 estimate} are satisfied if $f \in H^k(\Om)$, $g \in H^{k}(\Ga)$ for $k \ge 2$. In this case $f_h$ and $g_h$ can be chosen as finite element interpolations of $f$ and $g$. The interpolation errors are then bounded using Proposition~\ref{prop: interpolation}, and we arrive at~\eqref{eq: optimal order H1}.
\end{remark}
For the $L^2$-estimate, we need slightly more assumptions, see Remark~\ref{rem: assumption on f}.
\begin{thm}\label{theo: L2 estimate}
	Let $j \ge 1$ be a natural number, $f \in H^{j-1}(\Om) \cap H^1(\Om)$, $g \in H^{j-1}(\Ga)$, let $u \in H^{j+1}(\Om;\Ga)$ be the solution of \eqref{eq: variational form}. Denote by $u_h : \Omh^{(k)} \to \R$ the numerical solution of \eqref{eq: discrete problem} computed with isoparametric finite elements of order $k \ge 1$. Then, the error between the exact solution and the lifted finite element solution is bounded by
	\begin{align}
		\lVert u - u_h^l \rVert_{L^2(\Om;\Ga)} \le C h^{\min(k,j)+1} + c \lVert f - f_h^l \rVert_{L^2(\Om)} + c \lVert g-g_h^l \rVert_{L^2(\Ga)} + ch^{k+1} \lVert f-f_h^l \rVert_{H^1(\Om)} \,,
	\end{align}
	where $C$ depends on $\lVert f \rVert_{H^1(\Om)}$, $\lVert g \rVert_{L^2(\Ga)}$ and $\lVert u \rVert_{H^{\min(k,j)+1}(\Om;\Ga)}$.
	
	In particular: If $j \ge k$ and $f_h$ and $g_h$ are chosen such that $\lVert f- f_h^l \rVert_{L^2(\Om)} \le c h^{k+1}$, $\lVert f-f_h^l \rVert_{H^1(\Om)} \le c$ and $\lVert g-g_h^l \rVert_{L^2(\Ga)} \le c h^{k+1}$, then the error is bounded by
	\begin{align}\label{eq: final estimate}
		\lVert u - u_h^l \rVert_{L^2(\Om;\Ga)} \le C h^{k+1} \,.
	\end{align}
\end{thm}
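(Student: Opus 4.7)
The natural line of attack is an Aubin--Nitsche duality argument. Set $e = u - u_h^l \in V = H^1(\Om;\Ga)$ and let $z \in H^2(\Om;\Ga)$ solve the dual problem $a(z,\varphi) = m^\Om(e,\varphi) + m^\Ga(e,\varphi)$ for all $\varphi \in V$, so that Proposition~\ref{prop: ex uniq reg} with $j=1$ yields the regularity bound $\|z\|_{H^2(\Om;\Ga)} \le c\,\|e\|_{L^2(\Om;\Ga)}$. Testing with $e$, using symmetry of $a$, and inserting the lifted interpolant $I_h z$ (with unlifted counterpart $\widetilde I_h z \in V_h$), I decompose
\[
\|e\|_{L^2(\Om;\Ga)}^2 = a(z-I_h z, e) + \bigl[\ell(I_h z) - \ell_h(\widetilde I_h z)\bigr] + \bigl[a_h(u_h, \widetilde I_h z) - a(u_h^l, I_h z)\bigr],
\]
where the two brackets arise from substituting the continuous identity $a(u, I_h z) = \ell(I_h z)$ and the discrete identity $a_h(u_h, \widetilde I_h z) = \ell_h(\widetilde I_h z)$. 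The first summand is immediately bounded by $c\,h\,\|z\|_{H^2(\Om;\Ga)}\|e\|_{H^1(\Om;\Ga)}$ via Proposition~\ref{prop: interpolation}; inserting Theorem~\ref{theo: H1 estimate} for $\|e\|_{H^1}$ and the dual regularity $\|z\|_{H^2} \le c\|e\|_{L^2}$ produces the expected $C h^{\min(k,j)+1}\|e\|_{L^2}$ together with lower-order contributions in the data errors.

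The data-consistency bracket is analyzed by splitting its bulk and boundary contributions. For the bulk, writing
\[
\int_\Om f\,I_h z\,\dx - \int_{\Omh} f_h\,\widetilde I_h z\,\dx = \int_\Om (f-f_h^l)\,I_h z\,\dx + \bigl[m^\Om(f_h^l, I_h z) - m_h^\Om(f_h, \widetilde I_h z)\bigr]
\]
controls the first piece by $\|f-f_h^l\|_{L^2(\Om)}\|I_h z\|_{L^2(\Om)}$, and the second piece, via the $H^1$-version of Proposition~\ref{prop: bilinear form errors}, by $c h^{k+1}\|f_h^l\|_{H^1(\Om)}\|I_h z\|_{H^1(\Om)}$. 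The triangle inequality $\|f_h^l\|_{H^1} \le \|f\|_{H^1} + \|f-f_h^l\|_{H^1}$ explains both the hypothesis $f \in H^1(\Om)$ and the stray term $c h^{k+1}\|f-f_h^l\|_{H^1(\Om)}$ in the final bound. The analogous surface split only invokes the $L^2$-version of Proposition~\ref{prop: bilinear form errors} for $m^\Ga$, which already delivers $O(h^{k+1})$, so no $H^1$-assumption on $g$ is required.

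The crux is the bilinear-form consistency bracket $a_h(u_h, \widetilde I_h z) - a(u_h^l, I_h z)$. The $m^\Om$, $m^\Ga$ and $a^\Ga$ pieces are immediately $O(h^{k+1})$ by Proposition~\ref{prop: bilinear form errors}. The stubborn contribution is $R(v_h, w_h) := a_h^\Om(v_h, w_h) - a^\Om(v_h^l, w_h^l)$, whose naive bound is only $O(h^k)\|u_h^l\|_{H^1}\|I_h z\|_{H^1}$; upgrading this to $O(h^{k+1})$ is the main obstacle. I would bypass it by inserting the inverse lifts $u^{-l}$ and $z^{-l}$ and exploiting bilinearity:
\[
R(u_h, \widetilde I_h z) = R(u^{-l}, z^{-l}) + R(u_h - u^{-l}, z^{-l}) + R(u^{-l}, \widetilde I_h z - z^{-l}) + R(u_h - u^{-l}, \widetilde I_h z - z^{-l}).
\]
The first summand uses the sharper $H^2$-estimate of Proposition~\ref{prop: bilinear form errors} to gain $c h^{k+1}\|u\|_{H^2(\Om)}\|z\|_{H^2(\Om)}$. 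The remaining three only need the $O(h^k)$ $H^1$-version, but each now carries an extra factor of $h$ from either $\|u_h - u^{-l}\|_{H^1(\Omh)} \le c\|e\|_{H^1(\Om)}$ (bounded via Theorem~\ref{theo: H1 estimate}) or $\|\widetilde I_h z - z^{-l}\|_{H^1(\Omh)} \le c h\|z\|_{H^2(\Om;\Ga)}$ (Proposition~\ref{prop: interpolation}). Invoking the dual regularity, collecting all contributions, and dividing through by $\|e\|_{L^2(\Om;\Ga)}$ yields the stated bound; the second assertion follows by plugging in the assumed data rates.
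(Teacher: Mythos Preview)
Your proposal is correct and follows essentially the same Aubin--Nitsche duality argument as the paper: the same dual problem, the same three-term starting decomposition, the same treatment of the data-consistency bracket, and the same key trick of inserting $u^{-l}$ and $z^{-l}$ to upgrade the $a^\Om$-consistency from $O(h^k)$ to $O(h^{k+1})$ via the sharper $H^2$-estimate of Proposition~\ref{prop: bilinear form errors}. The only cosmetic difference is that the paper telescopes the full bilinear form $a_h(u_h,\widetilde I_h z)-a(u_h^l,I_h z)$ into three pieces (its terms (iii)--(v)), whereas you first peel off the already-good $m^\Om$, $m^\Ga$, $a^\Ga$ contributions and then do a four-term bilinear expansion of the remaining $a^\Om$-part; both routes lead to the same estimates.
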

\begin{remark}
	The assumptions in the second part of Theorem~\ref{theo: L2 estimate} are satisfied if $f \in H^{k+1}(\Om)$, $g \in H^{k+1}(\Ga)$ for $k \ge 1$ with $f_h = \widetilde{I}_h f$, $g_h = \widetilde{I}_h g$, see Proposition~\ref{prop: interpolation}.
\end{remark}
The proof of Theorems~\ref{theo: H1 estimate} and \ref{theo: L2 estimate} follows down below and is clearly separated into stability and consistency.

\subsection{Stability}
The finite element interpolation $u_h^*: \Omh^{(k)} \to \R$ of the exact solution, which corresponds to the nodal vector $\bfu^*=(u(x_j))_{j=1}^N$, satisfies the numerical scheme up to a defect $\bfd$, which corresponds to a finite element function $d_h \in V_h$:
\begin{align}\label{eq: defect equation}
	\bfK \bfu^* = \bfb + (\bfM_\Om + \bftr\T \bfM_\Ga \bftr) \bfd \,.
\end{align}
Note that $\bfK$ is symmetric and positive definite and thus both $\bfK^{-1}$ and $\bfK^{-1/2}$ exist. Subtracting \eqref{eq: defect equation} from \eqref{eq: matrix vector}, we find that the error $\bfe = \bfu - \bfu^*$ satisfies
\begin{align}
	\bfK \bfe = -(\bfM_\Om + \bftr\T \bfM_\Ga \bftr) \bfd \,.
\end{align}
We test this equation with $\bfe$ and obtain
\begin{align}
	\lVert \bfe \rVert_{\bfK}^2 := \bfe\T \bfK \bfe = - \bfe\T (\bfM_\Om + \bftr\T \bfM_\Ga \bftr) \bfd \,.
\end{align}
The defect will be estimated in the dual norm induced by the bilinear form $a_h$:
\begin{align}
	\| \bfd \|_\star &:= \| \bfK^{-1/2} (\bfM_\Om + \bftr\T \bfM_\Ga \bftr) \bfd \|_2 = \sup_{0 \ne \bfw \in \R^N} \frac{\bfd\T (\bfM_\Om + \bftr\T \bfM_\Ga \bftr) \bfK^{-1/2} \bfw}{(\bfw\T \bfw)^{1/2}} \\
	&= \sup_{0 \ne \bfz \in \R^N} \frac{\bfd\T (\bfM_\Om + \bftr\T \bfM_\Ga \bftr) \bfz}{(\bfz\T \bfK \bfz)^{1/2}} = \sup_{0 \ne \varphi_h \in V_h} \frac{\int_{\Omh} d_h \varphi_h \dx + \int_{\Gah} (\trh d_h) (\trh \varphi_h) \dS_h}{\| \varphi_h \|_{a_h}} \,.
\end{align}
With the Cauchy--Schwarz and Young inequality, we obtain
\begin{align}\label{eq: dual norm}
\lVert \bfe \rVert_{\bfK}^2 &= - \bfe\T (\bfM_\Om + \bftr\T \bfM_\Ga \bftr) \bfd = - \bfe\T \bfK^{1/2} \bfK^{-1/2} (\bfM_\Om + \bftr\T \bfM_\Ga \bftr) \bfd \\
&\le \| \bfK^{1/2} \bfe \|_2 \| \bfK^{-1/2} (\bfM_\Om + \bftr\T \bfM_\Ga \bftr) \bfd \|_2 \\
&= \| \bfe \|_{\bfK} \| \bfd \|_{\star} \,.
\end{align}
We thus have shown that
\begin{align}
\lVert \bfe \rVert_{\bfK} \le \rVert \bfd \rVert_\star \,.
\end{align}

\subsection{Consistency}
In this section, we bound the dual norm of the defect in order to obtain an optimal order $H^1$-estimate. In order to prove error bounds of order $j$, we assume that the solution $u \in H^{j+1}(\Om;\Ga)$, which is provided if $\Ga$ is a $C^{j+1}$-manifold, $g \in H^{j-1}(\Ga)$, $f \in H^{j-1}(\Om)$ (see Proposition~\ref{prop: ex uniq reg}). Note that since $j \ge 1$ and the dimension $n \in \{2,3\}$, we have $H^{j+1}(\Om) \subseteq C^0(\Om)$ and the finite element interpolation $\widetilde{I}_h u$ of $u$ is well-defined. 

\begin{proposition}\label{prop: defect estimate}
	Under the assumptions of Theorem~\ref{theo: H1 estimate}, the defect is bounded by
	\begin{align}\label{eq: defect bound}
		\lVert \bfd \rVert_\star \le C h^{\min(k,j)} + c \lVert f_h^l - f \rVert_{L^2(\Om)} + c \lVert g_h^l - g\rVert_{L^2(\Ga)} \,,
	\end{align}
	where $C=C(\lVert f \rVert_{L^2(\Om)},\lVert g \rVert_{L^2(\Ga)},\lVert u \rVert_{H^{j+1}(\Om;\Ga)})$.
\end{proposition}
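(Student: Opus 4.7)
The plan is to rewrite the defect as a residual that can be split into a geometric consistency error, an interpolation error, and a data-approximation error, each estimated via the tools of Section 4. Using the definition of $\|\cdot\|_\star$ from Section 5.2, one has for any $\varphi_h \in V_h$ with lift $\varphi_h^l$,
\begin{align}
m_h^\Om(d_h,\varphi_h) + m_h^\Ga(\trh d_h, \trh \varphi_h) = a_h(u_h^*,\varphi_h) - \ell_h(\varphi_h),
\end{align}
which follows by testing \eqref{eq: defect equation} with $\varphi_h$. The exact variational identity $a(u,\varphi_h^l) = \ell(\varphi_h^l)$ from \eqref{eq: variational form} may then be inserted, giving the decomposition
\begin{align}
a_h(u_h^*,\varphi_h) - \ell_h(\varphi_h) = \bigl[a_h(u_h^*,\varphi_h) - a(I_h u, \varphi_h^l)\bigr] + a(I_h u - u, \varphi_h^l) + \bigl[\ell(\varphi_h^l) - \ell_h(\varphi_h)\bigr],
\end{align}
where the identity $(u_h^*)^l = I_h u$ is used.

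\textbf{Geometric and interpolation errors.} Each of the four summands constituting $a_h(u_h^*,\varphi_h) - a(I_h u,\varphi_h^l)$ is estimated by the corresponding bound of Proposition \ref{prop: bilinear form errors}. Since the dominant factor is the $h^k$-bound coming from $a_h^\Om-a^\Om$, the $\alpha,\beta$ and $\C$-weighted sum yields
\begin{align}
\bigl|a_h(u_h^*,\varphi_h) - a(I_h u, \varphi_h^l)\bigr| \le c h^k \|I_h u\|_{H^1(\Om;\Ga)} \|\varphi_h^l\|_{H^1(\Om;\Ga)},
\end{align}
and by the triangle inequality with Proposition \ref{prop: interpolation}, $\|I_h u\|_{H^1(\Om;\Ga)} \le c \|u\|_{H^{\min(k,j)+1}(\Om;\Ga)}$. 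For the interpolation term, boundedness of $a$ on $H^1(\Om;\Ga)$ together with Proposition \ref{prop: interpolation} gives
\begin{align}
|a(I_h u - u, \varphi_h^l)| \le c h^{\min(k,j)} \|u\|_{H^{\min(k,j)+1}(\Om;\Ga)} \|\varphi_h^l\|_{H^1(\Om;\Ga)}.
\end{align}

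\textbf{Data error.} For the last bracket, the intermediate quantities $f_h^l$ and $g_h^l$ separate data from geometry:
\begin{align}
\ell(\varphi_h^l) - \ell_h(\varphi_h) &= m^\Om(f-f_h^l,\varphi_h^l) + \bigl[m^\Om(f_h^l,\varphi_h^l) - m_h^\Om(f_h,\varphi_h)\bigr] \\
&\quad + m^\Ga(g-g_h^l, \tr \varphi_h^l) + \bigl[m^\Ga(g_h^l,\tr\varphi_h^l) - m_h^\Ga(g_h,\trh \varphi_h)\bigr].
\end{align}
The Cauchy--Schwarz inequality bounds the data terms by $c\|f-f_h^l\|_{L^2(\Om)} \|\varphi_h^l\|_{L^2(\Om)}$ and $c\|g-g_h^l\|_{L^2(\Ga)} \|\tr\varphi_h^l\|_{L^2(\Ga)}$, while Proposition \ref{prop: bilinear form errors} controls the geometric corrections by $c h^k \|f_h^l\|_{L^2(\Om)} \|\varphi_h^l\|_{L^2(\Om)}$ and $c h^{k+1} \|g_h^l\|_{L^2(\Ga)} \|\tr\varphi_h^l\|_{L^2(\Ga)}$. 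Writing $\|f_h^l\|_{L^2} \le \|f\|_{L^2} + \|f-f_h^l\|_{L^2}$, and analogously for $g$, absorbs everything into the claimed structure with constant depending on $\|f\|_{L^2(\Om)},\|g\|_{L^2(\Ga)}$.

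\textbf{Conclusion and main obstacle.} Collecting the three contributions and dividing by $\|\varphi_h\|_{a_h}$, Lemma \ref{lemma: K norm} together with Proposition \ref{prop: norm equivalence} lets one replace $\|\varphi_h^l\|_{H^1(\Om;\Ga)}$ by $\|\varphi_h\|_{a_h}$ up to a constant, and the supremum over $\varphi_h$ yields \eqref{eq: defect bound}. The main technical hurdle is precisely this bookkeeping of three equivalent norms (the matrix $\bfK$-norm, the discrete $\|\cdot\|_{H^1(\Omh;\Gah)}$-norm, and the lifted $\|\cdot\|_{H^1(\Om;\Ga)}$-norm), combined with ensuring that the rate-determining factor $h^{\min(k,j)}$ indeed emerges only from $a_h^\Om-a^\Om$ and the interpolation of $u$, while the boundary geometric errors of higher order $h^{k+1}$ and the data geometric corrections do not degrade it.
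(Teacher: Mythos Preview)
Your proof is correct and follows essentially the same approach as the paper: subtract the exact variational identity from the defect equation, split into geometric consistency (via Proposition~\ref{prop: bilinear form errors}), interpolation error (via Proposition~\ref{prop: interpolation}), and data approximation, then divide by $\|\varphi_h\|_{a_h}$ using the norm equivalences. The only cosmetic difference is that the paper treats the four constituents $m^\Om, a^\Om, m^\Ga, a^\Ga$ of $a$ one at a time and performs the geometric/interpolation split for each separately, whereas you group them into a single $a_h(u_h^*,\varphi_h)-a(I_hu,\varphi_h^l)$ and a single $a(I_hu-u,\varphi_h^l)$; the estimates applied are identical.
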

\begin{proof}
	The defect equation \eqref{eq: defect equation} is equivalent to
	\begin{align}
		m_h^\Om(d_h,\varphi_h) + m_h^\Ga(d_h,\varphi_h) &= \alpha m_h^\Ga(\widetilde{I}_h u ,\varphi_h) + \beta a_h^\Ga(\widetilde{I}_h u, \varphi_h)+  \C m_h^\Om(\widetilde{I}_h u, \varphi_h) + a_h^\Om(\widetilde{I}_h u,\varphi_h) - \ell_h(\varphi_h) 
	\end{align}
	for all finite element functions $\varphi_h \in V_h$. Since $\varphi_h^l \in H^1(\Om;\Ga)$, the exact solution $u$ satisfies
	\begin{align}
		0&= \alpha m^\Ga(u,\varphi_h^l) + \beta a^\Ga(u,\varphi_h^l)+ \C m^\Om(u,\varphi_h^l) + a^\Om(u,\varphi_h^l) - \ell(\varphi_h^l) \,.
	\end{align}
	Subtracting both equations yields
	\begin{align}\label{eq: errors to estimate}
		m_h^\Om(d_h,\varphi_h) +m_h^\Ga(d_h,\varphi_h) &= \alpha \left( m_h^\Ga(\widetilde{I}_h u,\varphi_h) - m^\Ga(u,\varphi_h^l) \right) \\
		&+\beta\left( a_h^\Ga(\widetilde{I}_h u,\varphi_h) - a^\Ga(u,\varphi_h^l) \right) \\
		&+\C \left( m_h^\Om(\widetilde{I}_h u,\varphi_h) - m^\Om(u,\varphi_h^l) \right) \\
		&+\left( a_h^\Om (\widetilde{I}_h u,\varphi_h) - a^\Om(u,\varphi_h^l) \right) \\
		&+\left( \ell_h(\varphi_h) - \ell(\varphi_h^l) \right) \,.
	\end{align}
	We estimate the five terms separately.
	
	(i) We write
	\begin{align}
		&m_h^\Ga(\widetilde{I}_h u, \varphi_h) - m^\Ga(u, \varphi_h^l) \\
		=&m_h^\Ga(\widetilde{I}_h u, \varphi_h) - m^\Ga( I_h u, \varphi_h^l) + m^\Ga(I_h u - u, \varphi_h^l) \,.
	\end{align}
	With the Cauchy--Schwarz inequality and Proposition~\ref{prop: interpolation} we obtain for the second term: 
	\begin{align}
		m^\Ga(I_h u - u,\varphi_h^l) &\le \lVert \tr (I_h u - u) \Vert_{L^2(\Ga)} \lVert \tr \varphi_h^l \rVert_{L^2(\Ga)} \\
		&\le \lVert I_h u - u \rVert_{L^2(\Om;\Ga)} \lVert \varphi_h^l \rVert_{H^1(\Om;\Ga)} \\
		&\le c h^{j+1} \lVert u \rVert_{H^{j+1}(\Om;\Ga)} \lVert \varphi_h^l \rVert_{H^1(\Om;\Ga)} \,.
	\end{align}
	For the first term, we use Proposition~\ref{prop: bilinear form errors} and then Proposition~\ref{prop: interpolation}:
	\begin{align}
		&\left| m_h^\Ga(\widetilde{I}_h u,\varphi_h) - m^\Ga(I_h u, \varphi_h^l) \right| \\
		&\le c h^{k+1} \lVert \tr I_h u \rVert_{L^2(\Ga)} \lVert \tr \varphi_h^l \rVert_{L^2(\Ga)} \\
		&\le c h^{k+1} \lVert I_h u \rVert_{L^2(\Om;\Ga)} \lVert \varphi_h^l \rVert_{H^1(\Om;\Ga)} \\
		&\le c h^{k+1}\left( \lVert I_h u - u \rVert_{L^2(\Om;\Ga)} + \lVert u \rVert_{L^2(\Om;\Ga)} \right) \lVert \varphi_h^l \rVert_{H^1(\Om;\Ga)} \\
		&\le c h^{k+1} \left( ch^{j+1} \lVert u \rVert_{H^{j+1}(\Om;\Ga)} + \lVert u \rVert_{H^1(\Om;\Ga)} \right) \lVert \varphi_h^l \rVert_{H^1(\Om;\Ga)} \\
		&\le c h^{k+1} \lVert u \rVert_{H^{j+1}(\Om;\Ga)} \lVert \varphi_h^l \rVert_{H^1(\Om;\Ga)} \,.
	\end{align}
	
	(ii) Similarly, we write
	\begin{align}
		&a_h^\Ga(\widetilde{I}_h u,\varphi_h) - a^\Ga(u,\varphi_h^l) \\
		= & a_h^\Ga(\widetilde{I}_h u,\varphi_h) - a^\Ga(I_h u , \varphi_h^l) + a^\Ga(I_h u - u,\varphi_h^l) \,.
	\end{align}
	We then proceed as in the first step and obtain
	\begin{align}
		\left| a_h^{\Ga}(\widetilde{I}_h u,\varphi_h) - a^\Ga(u,\varphi_h^l) \right| \le c h^{\min(k+1,j)} \| u \|_{H^{j+1}(\Om;\Ga)} \| \varphi_h^l \|_{H^1(\Om;\Ga)} \,.
	\end{align}

	(iii,iv) Using Propositions~\ref{prop: interpolation} and~\ref{prop: bilinear form errors}, we obtain analogously
	\begin{align}
		\left| m_h^\Om (\widetilde{I}_h u,\varphi_h) - m^\Om(u,\varphi_h^l) \right| \le c h^{\min(k,j)+1} \lVert u \rVert_{H^{j+1}(\Om;\Ga)} \lVert \varphi_h^l \rVert_{H^1(\Om;\Ga)}
	\end{align}
	and
	\begin{align}
		\left| a_h^\Om(\widetilde{I}_h u,\varphi_h) - a^\Om(u,\varphi_h^l) \right| \le c h^{\min(k,j)} \lVert u \rVert_{H^{j+1}(\Om;\Ga)} \lVert \varphi_h^l \rVert_{H^1(\Om;\Ga)} \,.
	\end{align}
	
	(v) For the last term we note that
	\begin{align}
		\ell_h(\varphi_h) - \ell(\varphi_h^l) &= m_h^\Om(f_h,\varphi)-m^\Om(f,\varphi_h^l) \\
		&+m_h^\Ga(g_h,\trh \varphi_h) - m^\Ga(g, \tr \varphi_h^l) \,.
	\end{align}
	We write
	\begin{align}
		m_h^\Om(f_h,\varphi_h) - m^\Om(f,\varphi_h^l) =m_h^\Om(f_h,\varphi_h) - m^\Om(f_h^l,\varphi_h^l) + m^\Om(f_h^l-f,\varphi_h^l)  \,.
	\end{align}
	By the Cauchy--Schwarz inequality the last term is bounded by
	\begin{align}
		m^\Om(f_h^l-f,\varphi_h^l) \le \lVert f_h^l - f \rVert_{L^2(\Om)} \lVert \varphi_h^l \rVert_{H^1(\Om;\Ga)} \,.
	\end{align}
	For the first term we use Proposition~\ref{prop: bilinear form errors}:
	\begin{align}
		\left| m_h^\Om(f_h,\varphi_h) - m^\Om(f_h^l,\varphi_h^l) \right| &\le ch^{k} \lVert f_h^l \rVert_{L^2(\Om)} \lVert \varphi_h^l \rVert_{L^2(\Om)} \\
		&\le ch^{k} \left( \lVert f_h^l - f \rVert_{L^2(\Om)} + \lVert f \rVert_{L^2(\Om)} \right) \lVert \varphi_h^l \rVert_{H^1(\Om;\Ga)} \,,
	\end{align}
	so that we obtain the bound
	\begin{align}
		\left| m_h^\Om(f_h,\varphi_h) - m^\Om(f,\varphi_h^l) \right| \le c h^{k} \lVert f \rVert_{L^2(\Om)} \lVert \varphi_h^l \rVert_{H^1(\Om;\Ga)} + c \lVert f_h^l - f \rVert_{L^2(\Om)} \lVert \varphi_h^l \rVert_{H^1(\Om;\Ga)} \,.
	\end{align}
	In a similar fashion we estimate
	\begin{align}
		\left| m_h^\Ga(g_h,\varphi_h) - m^\Ga(g,\varphi_h^l) \right| \le \left( ch^{k+1} \lVert g \rVert_{L^2(\Ga)} + \lVert g_h^l - g \rVert_{L^2(\Ga)} \right) \lVert \varphi_h^l \rVert_{H^1(\Om;\Ga)} \,.
	\end{align}
	Adding the five estimates, using definition~\eqref{eq: dual norm} of the dual norm together with the coercivity of $a_h$, we obtain \eqref{eq: defect bound}.
\end{proof}
Now we can prove Theorem~\ref{theo: H1 estimate}.

\begin{ProofOf}{Theorem~\ref{theo: H1 estimate}}
	The error is decomposed in the following way:
	\begin{align}
		u-u_h^l = \left( u - I_h u \right) + \left( I_h u - u_h^l \right) \,.
	\end{align}
	With Proposition~\ref{prop: interpolation}, we obtain $\lVert u-I_hu \rVert_{H^1(\Om;\Ga)} \le c h^j \lVert u \rVert_{H^{j+1}(\Om;\Ga)}$. For the second term, we note that $\widetilde{I}_h u - u_h$ is the finite element function corresponding to the nodal vector $\bfu^*-\bfu = -\bfe$, so using Proposition~\ref{prop: norm equivalence} and Lemma~\ref{lemma: K norm}, we obtain
	\begin{align}
		\lVert I_h u - u_h^l \rVert_{H^1(\Om;\Ga)} &\le c \lVert \widetilde{I}_h u -u_h \rVert_{H^1(\Omh;\Gah)} \le c \lVert \bfe \rVert_\bfK \le c \lVert \bfd \rVert_\star \,,
	\end{align}
	so the result follows from Proposition~\ref{prop: defect estimate}.
\end{ProofOf}

\subsection{$L^2$-estimate}
In order to derive an optimal-order $L^2$-estimate, we apply the Aubin--Nitsche trick.

\begin{ProofOf}{Theorem~\ref{theo: L2 estimate}}
	Consider the dual problem: for $\eta \in L^2(\Om;\Ga)$, find $z_\eta \in H^1(\Om;\Ga)$ such that
	\[ a(z_\eta,\psi) = m^\Om(\eta,\psi) + m^\Ga(\tr \eta,\tr \psi) \quad \forall\, \psi \in H^1(\Om;\Gamma)  \,. \]
	This is the weak formulation of \eqref{eq: GRP} with $f=\eta$, $g=\tr \eta$. Since $\eta \in L^2(\Om;\Ga)$, we have $z_\eta \in H^2(\Om;\Ga)$ and $z_\eta$ satisfies the a priori estimate (see Proposition~\ref{prop: ex uniq reg})
	\begin{align}\label{eq: a priori dual problem}
		\lVert z_\eta \rVert_{H^2(\Om;\Ga)} \le c \lVert \eta \rVert_{L^2(\Om;\Ga)} \,.
	\end{align}
	With $\eta = e = u - u_h^l$ and writing $z = z_e$ for brevity, we have
	\begin{align}\label{eq: five terms}
		\lVert e \rVert_{L^2(\Om;\Ga)}^2 &= m^\Om(e,e) + m^\Ga(\tr e, \tr e) = a(e,z) \\
		&= a(u-u_h^l,z-I_h z) + a(u,I_h z) - a(u_h^l,I_h z) \\
		&= a(u-u_h^l,z-I_h z) + \ell(I_h z) - a(u_h^l, I_h z - z) - a(u_h^l,z) \\
		&=a(u-u_h^l,z-I_h z)+\ell(I_h z) - \ell_h(\widetilde{I}_h z) + a_h(u_h,\widetilde{I}_h z) \\
		&-a(u_h^l,I_hz-z) - a(u_h^l-u,z) - a(u,z) \\
		&=a(u-u_h^l,z-I_h z) \\
		&+\left( \ell(I_hz) - \ell_h(\widetilde{I}_h z) \right) \\
		&+\left( a_h(u_h,\widetilde{I}_h z - z^{-l}) - a(u_h^l,I_h z- z) \right) \\
		&+\left( a_h(u_h-u^{-l},z^{-l}) - a(u_h^l-u,z) \right) \\
		&+\left( a_h(u^{-l},z^{-l})- a(u,z) \right) \,.
	\end{align}
	We estimate the five terms separately.
	
	(i) Using the boundedness of $a$, Theorem~\ref{theo: H1 estimate}, Proposition~\ref{prop: interpolation} and the a priori bound \eqref{eq: a priori dual problem}, we obtain
	\begin{align}
		a(u-u_h^l,z-I_h z) &\le c \lVert u - u_h^l \rVert_{H^1(\Om;\Ga)} \lVert z - I_h z \rVert_{H^1(\Om;\Ga)} \\
		&\le c h \lVert z \rVert_{H^2(\Om;\Ga)} \lVert u -u_h^l \rVert_{H^1(\Om;\Ga)} \\
		&\le c h \lVert e \rVert_{L^2(\Om;\Ga)} \left( C h^{\min(k,j)} + c\lVert f-f_h^l \rVert_{L^2(\Om)} + c \lVert g-g_h^l \rVert_{L^2(\Ga)} \right) \\
		&\le \left( C h^{\min(k,j)+1} + c h \lVert f-f_h^l \rVert_{L^2(\Om)} + c h \lVert g-g_h^l \rVert_{L^2(\Ga)} \right) \lVert e \rVert_{L^2(\Om;\Ga)} \,.
	\end{align}
	
	(ii) We write
	\begin{align}
		\ell(I_h z) - \ell_h( \widetilde{I}_h z) &= m^\Om(f,I_h z)-m_h^\Om(f_h,\widetilde{I}_h z) \\
		&+m^\Ga(g, I_h z) - m_h^\Ga(g_h, \widetilde{I}_h z) \\
		&=m^\Om(f-f_h^l,I_hz) + \left( m^\Om(f_h^l,I_h z) - m_h^\Om(f_h, \widetilde{I}_h z) \right) \\
		&+m^\Ga(g-g_h^l, I_h z) + \left( m^\Ga(g_h^l, I_h z) - m_h^\Ga(g_h, \widetilde{I}_h z) \right) \,.
	\end{align}
	Using Cauchy--Schwarz, Proposition~\ref{prop: interpolation} and  \eqref{eq: a priori dual problem}, we see that the first term is bounded by
	\begin{align}
		m^\Om(f-f_h^l,I_hz) &\le \lVert f-f_h^l \rVert_{L^2(\Om)} \left( \lVert I_h z - z \rVert_{L^2(\Om;\Ga)} + \lVert z \rVert_{L^2(\Om;\Ga)} \right) \\
		&\le \lVert f -f_h^l \rVert_{L^2(\Om;\Ga)} \left( ch^2 + 1 \right) \lVert z \rVert_{H^2(\Om;\Ga)} \\
		&\le c \lVert f - f_h^l\rVert_{L^2(\Om)} \lVert e \rVert_{L^2(\Om;\Ga)} \,.
	\end{align}
	For the third term we proceed similarly and obtain 
	\begin{align}
		m^\Ga(g-g_h^l, I_h z) &\le \lVert g-g_h^l \rVert_{L^2(\Ga)} \lVert I_h z \rVert_{L^2(\Ga)} \\
		&\le \lVert g-g_h^l \rVert_{L^2(\Ga)} \left( \lVert I_h z - z\rVert_{L^2(\Om;\Ga)} + \lVert z \rVert_{L^2(\Om;\Ga)} \right) \\
		&\le c \lVert g-g_h^l \rVert_{L^2(\Ga)} \lVert e \rVert_{L^2(\Om;\Ga)} \,.
	\end{align}
	For the second term we use Propositions~\ref{prop: bilinear form errors} and~\ref{prop: interpolation} to obtain
	\begin{align}\label{eq: problem term}
		m^\Om(f_h^l,I_h z) - m_h^\Om(f_h,\widetilde{I}_h z) &\le c h^{k+1} \left( \lVert f_h^l - f \rVert_{H^1(\Om)} + \lVert f \rVert_{H^1(\Om)} \right)  \lVert I_h z \rVert_{H^1(\Om)}  \\
		&\le c h^{k+1} \left( \lVert f_h^l - f \rVert_{H^1(\Om)} + \lVert f \rVert_{H^1(\Om)} \right) \lVert e \rVert_{L^2(\Om;\Ga)} \,.
	\end{align}
	With Proposition~\ref{prop: bilinear form errors}, we similarly obtain
	\begin{align}
		m^\Ga(g_h^l, I_h z) - m_h^\Ga(g_h, \widetilde{I}_h z) \le c h^{k+1} \left( \lVert g \rVert_{L^2(\Ga)} + \lVert g_h^l - g\rVert_{L^2(\Ga)} \right) \lVert e \rVert_{L^2(\Om;\Ga)} \,.
	\end{align}
	
	(iii) With Proposition~\ref{prop: bilinear form errors} and Theorem~\ref{theo: H1 estimate}, we obtain
	\begin{align}
		& a_h(u_h,\widetilde{I_h} z - z^{-l}) - a(u_h^l, I_hz-z) \\ &\le ch^k \lVert u_h^l \rVert_{H^1(\Om;\Ga)} \lVert I_h z - z\rVert_{H^1(\Om;\Ga)} \\
		&\le c h^k \lVert u_h^l - u + u \rVert_{H^1(\Om;\Ga)} c h \lVert z \rVert_{H^2(\Om;\Ga)} \\
		&\le c h^{k+1} \left( \lVert u_h^l - u \rVert_{H^1(\Om;\Ga)} + \lVert u \rVert_{H^1(\Om;\Ga)} \right) \lVert z \rVert_{H^2(\Om;\Ga)} \\
		&\le c h^{k+1} \left( C h^{\min(k,j)} + c \lVert f-f_h^l \rVert_{L^2(\Om)} + c \lVert g-g_h^l \rVert_{L^2(\Ga)} \right) \lVert e \rVert_{L^2(\Om;\Ga)} \,.
	\end{align}
	
	(iv) Using the same arguments, we obtain for the fourth term
	\begin{align}
		& a(u_h-u^{-l},z^{-l}) - a(u_h^l-u,z) \\
		&\le c h^k \lVert u_h^l-u \rVert_{H^1(\Om;\Ga)} \lVert z \rVert_{H^1(\Om;\Ga)} \\
		&\le c h^k \left( C h^{\min(k,j)} + c\lVert f-f_h^l \rVert_{L^2(\Om)} + c \lVert g-g_h^l \rVert_{L^2(\Ga)} \right) \lVert e \rVert_{L^2(\Om;\Ga)} \,.
	\end{align}
	
	(v) For the fifth term, using $u,z \in H^2(\Om)$, we have with Proposition~\ref{prop: bilinear form errors} and Theorem~\ref{theo: H1 estimate}
	\begin{align}
		a_h(u^{-l},z^{-l}) - a(u,z) &\le c h^{k+1} \lVert u \rVert_{H^2(\Om;\Ga)} \lVert z \rVert_{H^2(\Om;\Ga)} \\
		&\le c h ^{k+1} \lVert u \rVert_{H^{k+1}(\Om;\Ga)} \lVert e\rVert_{L^2(\Om;\Ga)} \,.
	\end{align}
	
	Inserting all the bounds into \eqref{eq: five terms} gives the bound:
	\begin{align}
		\lVert e \rVert_{L^2(\Om;\Ga)} \le C h^{\min(k,j)+1}  + c \lVert f - f_h^l\rVert_{L^2(\Om)} + c \lVert g - g_h^l \rVert_{L^2(\Ga)} + c h^{k+1} \lVert f-f_h^l \rVert_{H^1(\Om)} \,,
	\end{align}
	where $C$ depends on $\lVert u \rVert_{H^{j+1}(\Om;\Ga)}$, $\lVert f \rVert_{H^1(\Om)}$ and $\lVert g \rVert_{L^2(\Ga)}$. This completes the proof of Theorem~\ref{theo: L2 estimate}.
\end{ProofOf}
\begin{remark}\label{rem: assumption on f}
	Compared with Theorem~\ref{theo: H1 estimate}, we need for $j=1$ the additional assumption that $f \in H^1(\Om)$. This is due to the first two estimates of Proposition~\ref{prop: bilinear form errors}, which only give a $h^k$-error bound for $f \in L^2(\Om)$ in~\eqref{eq: problem term}. Alternatively, since $f_h^l \in H^1(\Om)$, we could simply estimate
	\begin{align}
		m^\Om(f_h^l,I_h z) - m_h^\Om(f_h,\widetilde{I}_h z) \le c h^{k+1} \| f_h^l \|_{H^1(\Om)} \| e \|_{L^2(\Om;\Ga)}
	\end{align}
	in \eqref{eq: problem term} without using the triangle inequality and then make the reasonable assumption that $f_h$ can be chosen such that $\| f_h^l \|_{H^1(\Om)} \le c \| f \|_{L^2(\Om)}$ with a constant independent of $h$. Keeping in mind that we need $f \in H^{k+1}(\Om)$ anyway to obtain the full order, the assumption $f \in H^1(\Om)$ becomes redundant in this case.
\end{remark}

\begin{corollary}
	Consider the standard Robin problem
	\begin{align}\label{eq: SRP}
		\left\{ \begin{aligned}
		- \Delta u + \C u &= f \quad&&\text{in }\Omega \,,\\
		\dfdx{u}{\nu} + \alpha u &=g \quad&&\text{on }\Ga = \partial \Omega \,,
	\end{aligned}\right.
	\end{align}
	Here, the weak solution $u$ is in $H^1(\Om)$, and with minor modifications to the above convergence proof, we obtain under suitable assumptions the error estimate
	\begin{align}
		\| u - u_h^l \|_{L^2(\Om)} + h \| u - u_h^l \|_{H^1(\Om)} \le C h^{k+1}
	\end{align}
	for the isoparametric finite element method. The same result holds for the Neumann boundary condition, i.e. $\alpha=0$ and $\kappa > 0$.
\end{corollary}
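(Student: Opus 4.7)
The plan is to specialize the proofs of Theorems~\ref{theo: H1 estimate} and~\ref{theo: L2 estimate} to $\beta = 0$, replacing the solution space $H^1(\Om;\Ga)$ by $V = H^1(\Om)$ throughout. Since the Laplace--Beltrami term disappears from both $a$ and $a_h$, the forms $a^\Ga$ and $a_h^\Ga$ play no role and the entire step (ii) of the defect estimate in Proposition~\ref{prop: defect estimate}, together with the corresponding contribution in the Aubin--Nitsche decomposition~\eqref{eq: five terms}, is simply omitted.

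First I would set up the continuous problem. The weak form on $V = H^1(\Om)$ is coercive because $\alpha > 0$ (or, in the Neumann case with $\alpha = 0$, because $\C > 0$); standard elliptic regularity then replaces Proposition~\ref{prop: ex uniq reg} and gives $u \in H^{j+1}(\Om)$ for $f \in H^{j-1}(\Om)$, $g \in H^{j-1/2}(\Ga)$, with the trace theorem yielding $\tr u \in H^{j+1/2}(\Ga)$ (as already noted in the discussion following Proposition~\ref{prop: ex uniq reg}). The domain approximation, the lifts, Propositions~\ref{prop: norm equivalence} and~\ref{prop: bilinear form errors}, and the interior interpolation bound of Proposition~\ref{prop: interpolation} remain valid; Lemma~\ref{lemma: K norm} adapts verbatim with $H^1(\Omh;\Gah)$ replaced by $H^1(\Omh)$ since the $a_h^\Ga$ contribution has been dropped.

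Next I would redo the stability and consistency argument. The stability estimate $\| \bfe \|_\bfK \le \| \bfd \|_\star$ is unchanged. The identity~\eqref{eq: errors to estimate} is reproduced with the $a^\Ga$-line absent, and steps (i), (iii), (iv) and (v) are carried out exactly as before after replacing $\| u \|_{H^{j+1}(\Om;\Ga)}$ by $\| u \|_{H^{j+1}(\Om)}$. Boundary $L^2$-norms of test functions are controlled by the trace inequality $\| \tr \varphi_h^l \|_{L^2(\Ga)} \le c \| \varphi_h^l \|_{H^1(\Om)}$, and the boundary interpolation error is bounded by a standard surface interpolation estimate of the form $\| \tr(I_h u - u) \|_{L^2(\Ga)} \le c h^{j+1/2} \| u \|_{H^{j+1}(\Om)}$, which is amply sufficient for the desired order. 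This yields $\| u - u_h^l \|_{H^1(\Om)} \le C h^{\min(k,j)}$ under the assumptions of Theorem~\ref{theo: H1 estimate}.

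Finally, the $L^2$-estimate is obtained via the Aubin--Nitsche argument applied to the dual standard Robin problem with data $\eta \in L^2(\Om)$ and zero boundary data, whose solution $z_\eta \in H^2(\Om)$ satisfies $\| z_\eta \|_{H^2(\Om)} \le c \| \eta \|_{L^2(\Om)}$. The decomposition~\eqref{eq: five terms} carries over with all $a^\Ga$-contributions removed, each remaining term bounded exactly as in the proof of Theorem~\ref{theo: L2 estimate}; the delicate term analogous to~\eqref{eq: problem term} is handled as in Remark~\ref{rem: assumption on f}. The Neumann case $\alpha = 0$, $\C > 0$ follows by additionally dropping the $m^\Ga$-contribution from $a$. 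The main (and essentially only) obstacle is bookkeeping: one must verify that the original proofs never implicitly required the extra $H^1(\Ga)$-regularity of $\tr u$, which is indeed the case since that regularity entered exclusively through the $a^\Ga$-terms.
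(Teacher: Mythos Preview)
Your proposal is correct and matches what the paper intends: the corollary is stated without an explicit proof, only the remark that it follows ``with minor modifications to the above convergence proof,'' and your sketch is precisely a faithful unpacking of those modifications (drop the $a^\Ga$-terms, work in $H^1(\Om)$ instead of $H^1(\Om;\Ga)$, invoke standard elliptic regularity for the Robin/Neumann problem, and rerun stability, consistency, and the Aubin--Nitsche argument). There is nothing to add.
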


\section{Numerical examples}
We illustrate the theoretical results with some numerical examples. We use isoparametric finite elements of degree one and two to solve a generalized Robin problem in two and three space dimensions. Polyhedral approximations are obtained with \emph{distmesh} \cite{persson2004}. For quadratic finite elements, we add new nodes and project the boundary nodes on the boundary. All functions are implemented in MATLAB, the isoparametric elements are implemented based on the ideas of \cite{BCH06}.

\begin{example}\emph{(Two-dimensional)}\\
	We solve the generalized Robin boundary value problem
	\begin{align}\label{grp}
		\left\{ \begin{aligned}
		- \Delta u + u &=f \quad&&\text{in }\Omega\,,\\
		\dfdx{u}{\nu} + u - \Delta_\Gamma u &=g \quad&&\text{on }\Ga = \partial \Om \,,
		\end{aligned} \right.
	\end{align}
	where $\Omega = \left\{ x \in \R^2:\, |x| < 1 \right\}$ is the unit circle, with isoparametric finite elements of degree one and two. As exact solution, we chose
	\begin{align}
		u(x,y) = xy(x^2+y^2)^2
	\end{align}
	from which we compute the right-hand side functions $f$ and $g$. We compute numerical solutions for different mesh sizes. The finest mesh we used for linear finite elements has around 18000 nodes and the refined version used for quadratic finite elements has around 73500 nodes. The error between the lifted numerical solution and the exact solution is reported in Figure~\ref{fig: GRP order 1} for elements of polynomial degree 1 and 2.
\begin{figure}[htb!]
	\begin{center}
%
%
\definecolor{mycolor1}{rgb}{0.00000,0.44700,0.74100}%
\definecolor{mycolor2}{rgb}{0.85000,0.32500,0.09800}%
\definecolor{mycolor3}{rgb}{0.92900,0.69400,0.12500}%
\definecolor{mycolor4}{rgb}{0.49400,0.18400,0.55600}%
\begin{tikzpicture}

\begin{axis}[%
width=1.813in,
height=2.971in,
at={(3.089in,0.401in)},
scale only axis,
xmode=log,
xmin=0.0168,
xmax=0.295223302168272,
xminorticks=true,
xlabel style={font=\color{white!15!black}},
xlabel={mesh size ($h$)},
ymode=log,
ymin=1e-08,
ymax=1,
yminorticks=true,
ylabel style={font=\color{white!15!black}},
ylabel={error},
axis background/.style={fill=white},
title style={font=\bfseries},
title={$ \| u - u_h^l \|_{H^1(\Omega;\Gamma)}$},
legend style={at={(0.97,0.03)}, anchor=south east, legend cell align=left, align=left, draw=white!15!black}
]
\addplot [color=mycolor1, dashdotted]
  table[row sep=crcr]{%
0.295223302168272	0.295223302168272\\
0.211533489057441	0.211533489057441\\
0.165026171216324	0.165026171216324\\
0.119948640463718	0.119948640463718\\
0.0856722611339038	0.0856722611339038\\
0.0622338397266234	0.0622338397266234\\
0.0400054046366055	0.0400054046366055\\
0.0247257705206805	0.0247257705206805\\
0.0168036510975332	0.0168036510975332\\
};
\addlegendentry{$\mathcal{O}(h)$}

\addplot [color=mycolor2, dashed]
  table[row sep=crcr]{%
0.295223302168272	0.087156798143139\\
0.211533489057441	0.0447464169928143\\
0.165026171216324	0.0272336371863193\\
0.119948640463718	0.0143876763490942\\
0.0856722611339038	0.00733973632779581\\
0.0622338397266234	0.00387305080711906\\
0.0400054046366055	0.00160043240013853\\
0.0247257705206805	0.000611363727841355\\
0.0168036510975332	0.000282362690207628\\
};
\addlegendentry{$\mathcal{O}(h^2)$}

\addplot [color=mycolor3, line width=1.0pt, mark=x, mark options={solid, mycolor3}]
  table[row sep=crcr]{%
0.295223302168272	0.0540680781194393\\
0.211533489057441	0.0491919496470793\\
0.165026171216324	0.0365016341699942\\
0.119948640463718	0.026063916530555\\
0.0856722611339038	0.0164125827631276\\
0.0622338397266234	0.011575012812463\\
0.0400054046366055	0.00525546592713061\\
0.0247257705206805	0.00299467131469362\\
0.0168036510975332	0.00183548642125359\\
};
\addlegendentry{p1}

\addplot [color=mycolor4, line width=1.0pt, mark=o, mark options={solid, mycolor4}]
  table[row sep=crcr]{%
0.295223302168272	0.015040963389491\\
0.211533489057441	0.00572584521024263\\
0.165026171216324	0.00352014990249127\\
0.119948640463718	0.00152328947307084\\
0.0856722611339038	0.000769410102532125\\
0.0622338397266234	0.000346644747950521\\
0.0400054046366055	0.000100151130494474\\
0.0247257705206805	3.61272819386309e-05\\
0.0168036510975332	1.49793813395523e-05\\
};
\addlegendentry{p2}

\end{axis}

\begin{axis}[%
width=1.813in,
height=2.971in,
at={(0.204in,0.401in)},
scale only axis,
xmode=log,
xmin=0.0168,
xmax=0.295223302168272,
xminorticks=true,
xlabel style={font=\color{white!15!black}},
xlabel={mesh size ($h$)},
ymode=log,
ymin=1e-08,
ymax=1e-00,
yminorticks=true,
ylabel style={font=\color{white!15!black}},
ylabel={error},
axis background/.style={fill=white},
title style={font=\bfseries},
title={$ \| u - u_h^l \|_{L^2(\Omega;\Gamma)}$},
legend style={at={(0.97,0.03)}, anchor=south east, legend cell align=left, align=left, draw=white!15!black}
]
\addplot [color=mycolor1, dashdotted]
  table[row sep=crcr]{%
0.295223302168272	0.087156798143139\\
0.211533489057441	0.0447464169928143\\
0.165026171216324	0.0272336371863193\\
0.119948640463718	0.0143876763490942\\
0.0856722611339038	0.00733973632779581\\
0.0622338397266234	0.00387305080711906\\
0.0400054046366055	0.00160043240013853\\
0.0247257705206805	0.000611363727841355\\
0.0168036510975332	0.000282362690207628\\
};
\addlegendentry{$\mathcal{O}(h^2)$}

\addplot [color=mycolor2, dashed]
  table[row sep=crcr]{%
0.295223302168272	0.0257307177542311\\
0.211533489057441	0.00946536570930916\\
0.165026171216324	0.00449426287315277\\
0.119948640463718	0.00172578221750583\\
0.0856722611339038	0.000628811807328923\\
0.0622338397266234	0.000241034823183317\\
0.0400054046366055	6.40259457610757e-05\\
0.0247257705206805	1.51164392392731e-05\\
0.0168036510975332	4.74472412920983e-06\\
};
\addlegendentry{$\mathcal{O}(h^3)$}

\addplot [color=mycolor3, line width=1.0pt, mark=x, mark options={solid, mycolor3}]
  table[row sep=crcr]{%
0.295223302168272	0.0143850864868182\\
0.211533489057441	0.00877956342679032\\
0.165026171216324	0.00521877946331157\\
0.119948640463718	0.00274592473651216\\
0.0856722611339038	0.00155495918630085\\
0.0622338397266234	0.00076971382490687\\
0.0400054046366055	0.000274542505476727\\
0.0247257705206805	0.000122175633107567\\
0.0168036510975332	5.90517635124108e-05\\
};
\addlegendentry{p1}

\addplot [color=mycolor4, line width=1.0pt, mark=o, mark options={solid, mycolor4}]
  table[row sep=crcr]{%
0.295223302168272	0.000540338492166305\\
0.211533489057441	0.000187299151728961\\
0.165026171216324	9.38764976101426e-05\\
0.119948640463718	3.5640037836927e-05\\
0.0856722611339038	1.42406342037732e-05\\
0.0622338397266234	4.40146504655565e-06\\
0.0400054046366055	7.10634997125758e-07\\
0.0247257705206805	1.70091335674266e-07\\
0.0168036510975332	4.8359730415478e-08\\
};
\addlegendentry{p2}

\end{axis}
\end{tikzpicture}%
	\end{center}
	\caption{Convergence rate of the GRP discretization with isoparametric finite elements of degree $1$ and $2$ in two dimensions.}\label{fig: GRP order 1}
\end{figure}
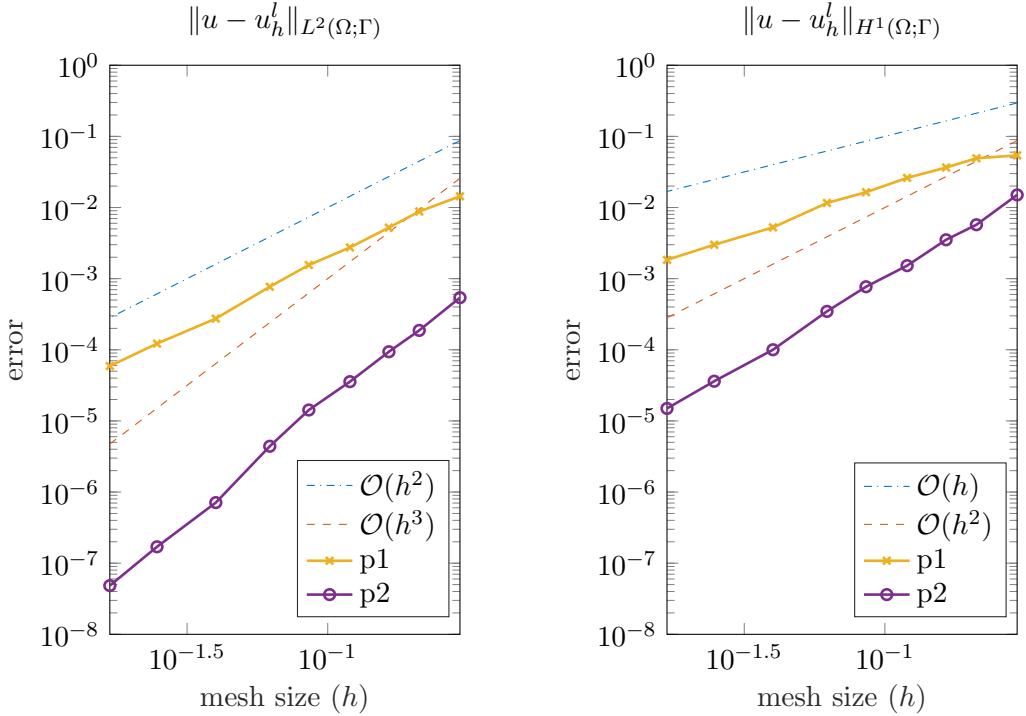
\end{example}

\begin{example}\emph{(Three-dimensional)}\\
	We solve the generalized Robin boundary value problem \eqref{grp} where $\Omega = \left\{ x \in \R^3:\, |x| < 1 \right\}$ is the unit ball, with isoparametric finite elements of degree one and two. As exact solution, we chose
	\begin{align}
		u(x,y) = x^2+y^2-x^2z^2
	\end{align}
	from which we compute the right-hand side functions $f$ and $g$. The finest mesh we used for linear finite elements has around 7000 nodes, and the refined version used for quadratic finite elements has around 55000 nodes. The error between the lifted numerical solution and the exact solution is reported in Figure~\ref{fig: GRP order 1 3d} for elements of polynomial degree 1 and 2.
\begin{figure}[htb!]
	\begin{center}
%
%
\definecolor{mycolor1}{rgb}{0.00000,0.44700,0.74100}%
\definecolor{mycolor2}{rgb}{0.85000,0.32500,0.09800}%
\definecolor{mycolor3}{rgb}{0.92900,0.69400,0.12500}%
\definecolor{mycolor4}{rgb}{0.49400,0.18400,0.55600}%
\begin{tikzpicture}

\begin{axis}[%
width=1.813in,
height=2.971in,
at={(3.089in,0.401in)},
scale only axis,
xmode=log,
xmin=0.154112057636748,
xmax=0.764170064213435,
xminorticks=true,
xlabel style={font=\color{white!15!black}},
xlabel={mesh size ($h$)},
ymode=log,
ymin=1e-08,
ymax=1,
yminorticks=true,
ylabel style={font=\color{white!15!black}},
ylabel={error},
axis background/.style={fill=white},
title style={font=\bfseries},
title={$ \| u - u_h^l \|_{H^1(\Omega;\Gamma)}$},
legend style={at={(0.97,0.03)}, anchor=south east, legend cell align=left, align=left, draw=white!15!black}
]
\addplot [color=mycolor1, dashdotted]
  table[row sep=crcr]{%
0.764170064213435	0.764170064213435\\
0.561081903638361	0.561081903638361\\
0.495317573898623	0.495317573898623\\
0.392574194621791	0.392574194621791\\
0.327546821883398	0.327546821883398\\
0.240340872043436	0.240340872043436\\
0.189930235641293	0.189930235641293\\
0.154112057636748	0.154112057636748\\
};
\addlegendentry{$\mathcal{O}(h)$}

\addplot [color=mycolor2, dashed]
  table[row sep=crcr]{%
0.764170064213435	0.0583955887039965\\
0.561081903638361	0.0314812902590447\\
0.495317573898623	0.0245339499012818\\
0.392574194621791	0.0154114498282948\\
0.327546821883398	0.0107286920525914\\
0.240340872043436	0.0057763734774599\\
0.189930235641293	0.0036073494410757\\
0.154112057636748	0.00237505263090323\\
};
\addlegendentry{$\mathcal{O}(h^2)$}

\addplot [color=mycolor3, line width=1.0pt, mark=x, mark options={solid, mycolor3}]
  table[row sep=crcr]{%
0.764170064213435	0.517904712369996\\
0.561081903638361	0.37129789578089\\
0.495317573898623	0.252070717683291\\
0.392574194621791	0.172687504093467\\
0.327546821883398	0.116003207975478\\
0.240340872043436	0.0823026640542884\\
0.189930235641293	0.0591207183952977\\
0.154112057636748	0.0429368767703704\\
};
\addlegendentry{p1}

\addplot [color=mycolor4, line width=1.0pt, mark=o, mark options={solid, mycolor4}]
  table[row sep=crcr]{%
0.764170064213435	0.0519065129031141\\
0.561081903638361	0.0256279788055093\\
0.495317573898623	0.0151241580005022\\
0.392574194621791	0.0097194979570817\\
0.327546821883398	0.00539117595019846\\
0.240340872043436	0.00405725261077272\\
0.189930235641293	0.00206718252050545\\
0.154112057636748	0.00108958039548522\\
};
\addlegendentry{p2}

\end{axis}

\begin{axis}[%
width=1.813in,
height=2.971in,
at={(0.704in,0.401in)},
scale only axis,
xmode=log,
xmin=0.154112057636748,
xmax=0.764170064213435,
xminorticks=true,
xlabel style={font=\color{white!15!black}},
xlabel={mesh size ($h$)},
ymode=log,
ymin=1e-08,
ymax=1,
yminorticks=true,
ylabel style={font=\color{white!15!black}},
ylabel={error},
axis background/.style={fill=white},
title style={font=\bfseries},
title={$ \| u - u_h^l \|_{L^2(\Omega;\Gamma)}$},
legend style={at={(0.97,0.03)}, anchor=south east, legend cell align=left, align=left, draw=white!15!black}
]
\addplot [color=mycolor1, dashdotted]
  table[row sep=crcr]{%
0.764170064213435	0.583955887039965\\
0.561081903638361	0.314812902590447\\
0.495317573898623	0.245339499012818\\
0.392574194621791	0.154114498282948\\
0.327546821883398	0.107286920525914\\
0.240340872043436	0.057763734774599\\
0.189930235641293	0.036073494410757\\
0.154112057636748	0.0237505263090323\\
};
\addlegendentry{$\mathcal{O}(h^2)$}

\addplot [color=mycolor2, dashed]
  table[row sep=crcr]{%
0.764170064213435	0.0446241607697144\\
0.561081903638361	0.0176635822675366\\
0.495317573898623	0.0121520965432533\\
0.392574194621791	0.00605013750429696\\
0.327546821883398	0.00351414898479198\\
0.240340872043436	0.00138829863882129\\
0.189930235641293	0.000685144729383995\\
0.154112057636748	0.000366024247944067\\
};
\addlegendentry{$\mathcal{O}(h^3)$}

\addplot [color=mycolor3, line width=1.0pt, mark=x, mark options={solid, mycolor3}]
  table[row sep=crcr]{%
0.764170064213435	0.180747720494383\\
0.561081903638361	0.162397981018514\\
0.495317573898623	0.111536859778625\\
0.392574194621791	0.0646654213044169\\
0.327546821883398	0.0419292818191011\\
0.240340872043436	0.0247909666387772\\
0.189930235641293	0.0141112730511281\\
0.154112057636748	0.0102976905280704\\
};
\addlegendentry{p1}

\addplot [color=mycolor4, line width=1.0pt, mark=o, mark options={solid, mycolor4}]
  table[row sep=crcr]{%
0.764170064213435	0.00883979126651823\\
0.561081903638361	0.00259045212556109\\
0.495317573898623	0.00134064870430513\\
0.392574194621791	0.000514723567626956\\
0.327546821883398	0.000214984515755581\\
0.240340872043436	8.74902010290355e-05\\
0.189930235641293	3.81333935151846e-05\\
0.154112057636748	1.7592679041029e-05\\
};
\addlegendentry{p2}

\end{axis}
\end{tikzpicture}%
	\end{center}
	\caption{Convergence rate of the GRP discretization with isoparametric finite elements of degree $1$ and $2$ in three dimensions.}\label{fig: GRP order 1 3d}
\end{figure}
\end{example}

%
%

\section*{Acknowledgments}
The author is very grateful to Christian Lubich and Bal\'azs  Kov\'acs for stimulating discussions and their help during the work on this manuscript.

\bibliographystyle{plain}
\bibliography{biblio}
\end{document}